\newcommand{\N}{\mathbb{N}}
\newcommand{\NU}{\mathcal{N}_U}
\newcommand{\rep}{\mathrm{rep}}
\newcommand{\Pref}{\mathrm{Pref}}
\newcommand{\Suff}{\mathrm{Suff}}
\newcommand{\Fac}{\mathrm{Fac}}
\newcommand{\floor}[1]{\left\lfloor#1\right\rfloor}
\newcommand{\ceil}[1]{\left\lceil#1\right\rceil}
\newcommand{\lex}{\mathrm{lex}}
\begin{document}
\title{A full characterization of Bertrand numeration systems\thanks{\'Emilie Charlier is supported by the FNRS grant J.0034.22. \\ 
C\'elia Cisternino is supported by the FNRS grant 1.A.564.19F. \\
Manon Stipulanti is supported by the FNRS grant 1.B.397.20.}}
%
%
\author{\'Emilie Charlier\orcidID{0000-0002-5789-2674} \and C\'elia Cisternino\orcidID{0000-0002-1994-9625} \and
Manon Stipulanti\orcidID{0000-0002-2805-2465}}
\authorrunning{\'E. Charlier et al.}
%
\institute{Department of Mathematics, University of Li\`ege, Li\`ege, Belgium
\email{\{echarlier,ccisternino,m.stipulanti\}@uliege.be}}
\maketitle              
\begin{abstract}

Among all positional numeration systems, the widely studied Bertrand numeration systems are defined by a simple criterion in terms of their numeration languages. In 1989, Bertrand-Mathis characterized them via representations in a real base $\beta$. However, the given condition turns to be not necessary. Hence, the goal of this paper is to provide a correction of Bertrand-Mathis' result. The main difference arises when $\beta$ is a Parry number, in which case are derived two associated Bertrand numeration systems.
Along the way, we define a non-canonical $\beta$-shift and study its properties analogously to those of the usual canonical one.

\keywords{Numeration systems \and Bertrand condition \and Real bases expansion \and Dominant root \and Parry numbers \and Subshifts}
\end{abstract}
%
%
%

\section{Introduction}

In $1957$, R\'enyi~\cite{Renyi:1957} introduced representations of real numbers in a real base $\beta>1$. A \emph{$\beta$-representation} of a nonnegative real number $x$ is an infinite sequence $a_1a_2\cdots$ over $\N$ such that $x=\sum_{i=1}^\infty \frac{a_i}{\beta^{i}}$.
The most commonly used algorithm in order to obtain such digits $a_i$ is the greedy algorithm.
The corresponding distinguished $\beta$-representation of a given $x\in[0,1]$ is called the $\beta$-\emph{expansion} of $x$ and is obtained as follows: set $r_0(x)=x$ and for all $i\ge 1$, let $\varepsilon_i(x)=\floor{\beta\, r_{i-1}(x)}$ and $r_i(x)=\beta\, r_{i-1}(x) - \varepsilon_i(x)$. 
The $\beta$-expansion of $x$ is the infinite word $d_{\beta}(x)=\varepsilon_1(x)\varepsilon_2(x)\cdots$ written over the alphabet $\{0,\ldots, \floor{\beta}\}$.  
In this theory, the $\beta$-expansion of $1$ and the \emph{quasi-greedy $\beta$-expansion} of 1 given by
$d_\beta^*(1)=\lim_{x\to 1^-}d_\beta(x)$ play crucial roles, as well as the \emph{$\beta$-shift}
\[
	S_\beta=\{w\in \{0,\ldots,\ceil{\beta}-1\}^\N : \forall i\ge 0,\ \sigma^i(w)\le_{\lex} d_{\beta}^*(1)\}
\]
where $\sigma(w_1w_2\cdots)$ denotes the shifted word $w_2w_3\cdots$. Parry~\cite{Parry1960} showed that   the $\beta$-shift $S_\beta$ is the topological closure (w.r.t. the prefix distance) of the set of infinite words that are the $\beta$-expansions of some real number in $[0,1)$ and Bertrand-Mathis~\cite{Bertrand-Mathis1986} characterized the real bases $\beta$ for which $S_\beta$ is sofic, i.e., its factors form a language that is accepted by a finite automaton. Expansions in a real base are extensively studied under various points of view and we can only cite a few of the many possible references~\cite{Bertrand-Mathis1986,Dajani&Kraaikamp2002,Lothaire2002,Parry1960,Schmidt1980}.

In parallel, other numeration systems are also widely studied, this time to represent nonnegative integers. A {\em positional numeration system} is given by an increasing integer sequence $U=(U(i))_{i\ge 0}$ such that $U(0)=1$ and the quotients $\frac{U(i+1)}{U(i)}$ are bounded. The \emph{greedy $U$-representation} of $n\in\N$, denoted $\rep_U(n)$, is the unique word
$a_1 \cdots a_\ell$ over $\N$ such that $n=\sum_{i=1}^\ell a_i U(\ell-i)$, $a_1 \neq 0$ and for all
$j \in \{1,\ldots,\ell\}$, $\sum_{i=j}^\ell a_iU(\ell-i)< U(\ell-j+1)$. These representations are written over the finite alphabet $A_U=\{0,\ldots,\sup_{i\ge 0} \big\lceil \frac{U(i+1)}{U(i)}\big\rceil - 1\}$. The \emph{numeration language} is the set $\NU=0^*\rep_U(\N)$.
Similarly, the literature about positional numeration systems is vast; see~\cite{Bertrand-Mathis1989,BruyereHansel1997,CharlierRampersadRigoWaxweiler2011,Loraud1995,MassuirPeltomakiRigo2019,Point2000,Shallit1994} for the most topic-related ones.

There exists an intimate link between $\beta$-expansions and greedy $U$-representa\-tions. Its study goes back to the work~\cite{Bertrand-Mathis1989} of Bertrand-Mathis. A positional numeration system $U$ is called \emph{Bertrand} if the corresponding numeration language $\NU$ is both prefix-closed and \emph{prolongable}, i.e., if for all words $w$ in $\NU$, the word $w0$ also belongs to $\NU$. These two conditions can be summarized as
\begin{align}
\label{Eq : BertrandCondition}
 \forall w\in A_U^*,\ w\in\NU \iff w0 \in\NU.
\end{align} 
The usual integer base numeration systems are Bertrand, as well the Zeckendorf numeration system~\cite{Zeckendorf1972}. 
This form of the definition of Bertrand numeration systems, as well as their names after Bertrand-Mathis, was first given in~\cite{BruyereHansel1997}, and then used in~\cite{CharlierRampersadRigoWaxweiler2011,MassuirPeltomakiRigo2019,Point2000}. Bertrand numeration systems were also reconsidered in~\cite{Loraud1995}. 
Moreover, the normalization in base $\beta>1$ in~\cite{BruyereHansel1997,FrougnySolomyak1996}  deals with these Bertrand numeration systems. 

In~\cite{Bertrand-Mathis1989}, Bertrand-Mathis showed that a positional numeration system $U$ is Bertrand if and only if there exists a real number $\beta>1$ such that $\NU=\Fac(S_\beta)$. In this case, $A_U=\{0,\ldots,\ceil{\beta}-1\}$ and for all $i\ge 0$, 
\begin{equation}
\label{Eq : Bertrand}
	U(i)=d_1 U(i-1)+d_2 U(i-2)+ \cdots + d_iU(0)+1
\end{equation}
where $(d_i)_{i\ge 1}=d^*_\beta(1)$. This result has been widely used, see for example~\cite{BruyereHansel1997,CharlierRampersadRigoWaxweiler2011,Lothaire2002}. However, the condition stated above is \emph{not necessary} (see Section~\ref{Sec : ProofThm}). Note that it is trivially sufficient. Therefore, in this work, we propose a correction of this famous theorem by fully characterizing Bertrand numeration systems.

The paper is organized as follows. We first fix some notation in Section~\ref{Sec : Preli}. In Section~\ref{Sec : ProofThm}, we illustrate the fact that the Bertrand-Mathis theorem stated above does not fully characterize Bertrand numeration systems and we provide a correction of this result. Then, in Section~\ref{Sec : Linear}, we investigate Bertand numeration systems based on a sequence that satisfies a linear recurrence relation. In Section~\ref{Sec : LexMax}, we obtain a second characterization of Bertrand numeration systems in terms of the lexicographically greatest words of each length in $\mathcal{N}_U$. This provides a refinement of a result of Hollander~\cite{Hollander1998}. Finally, seeing the importance of the newly introduced non-canonical $\beta$-shift, we study its main properties in Section~\ref{Sec : NonCanonicalShift}.

\section{Basic notation}
\label{Sec : Preli}

We make use of common notions in formal language theory, such as alphabet, letter, word, length of a word, prefix distance, convergence of words, language, code and automaton~\cite{Lothaire2002}. In particular, the length of a finite word $w$ is denoted by $|w|$. The notation $w^\omega$ means an infinite repetition of the finite word $w$. The set of factors of a word $w$ is written $\Fac(w)$ and the set of factors of words in a set $L$ is written $\Fac(L)$. Given a finite word $w$ and $n\in \{1,\ldots, |w|\}$, the \emph{prefix} and \emph{suffix} of length $n$ of $w$ are respectively written $\Pref_n(w)$ and $\Suff_n(w)$. Similarly, for an infinite word $w$ and $n\ge 0$, we let $\Pref_n(w)$ denote the prefix of length $n$ of $w$. 

Let $(A,<)$ be a totally ordered alphabet. The order $<$ on $A$ induces the following orders on words over $A$. For two length-$n$ words $u,v\in A^*$, we write $u<_{\lex}v$ if there exists $\ell\in \{1,\ldots,n\}$ such that $\Pref_{\ell-1}(u)<_{\lex}\Pref_{\ell-1}(v)$ and $u_\ell<v_{\ell}$, and we write $u\le_{\lex}v$ if either $u<_{\lex}v$ or $u=v$. For two infinite words $u,v \in A^\N$, we write $u<_{\lex}v$ if there exists $n\ge1$ such that $\Pref_n(u)<_{\lex}\Pref_n(v)$. In both cases, if $u<_{\lex}v$ then we say that $u$ is \emph{lexicographically less} than $v$.

\section{Characterization of Bertrand numeration systems}
\label{Sec : ProofThm}

The goal of this section is to give a full characterization of Bertrand numeration systems defined by~\eqref{Eq : BertrandCondition}. In doing so, we correct the result of Bertand-Mathis stated in the introduction.

First, we note that both implications in~\eqref{Eq : BertrandCondition} are relevant. This observation is illustrated in the following example.

\begin{example}
Consider the numeration system $U$ defined by $(U(0),U(1))=(1,3)$ and $U(i)=U(i-1)+U(i-2)$ for all $i\ge 2$. It is not Bertrand as its numeration language is not prolongable: for instance, $2\in\NU$ but $20\notin\NU$.

Now, consider $U$ defined by $(U(0), U(1))= (1,2)$ and  $U(i)= 5U(i-1)+U(i-2)$ for all $i\ge 2$. It is not Bertrand since the corresponding language $\NU$ is not prefix-closed. Indeed, $50 \in \rep_U(\N)$ but $5 \notin \rep_U(\N)$. 
\end{example}

Then, let us show that the condition given in the original Bertrand-Mathis result characterizing the Bertrand numeration systems is \emph{not necessary}. This fact was already observed in~\cite{MassuirPeltomakiRigo2019}.

\begin{example}
\label{Ex : CounterExampleBertrand} \
Consider the positional numeration system  $U$ defined by $U(0)=1$ and $U(i)=3U(i-1)+1$ for all $i \ge 1$. This is the example given in~\cite{MassuirPeltomakiRigo2019}. It is easy to see that $\NU=\{0,1,2\}^* \cup \{0,1,2\}^* 3 0^*$. The minimal automaton of this language is depicted in Figure~\ref{Fig : SubNonCanonical3}. Therefore, $U$ is Bertrand. However, $U$ satisfies~\eqref{Eq : Bertrand} with $(d_n)_{n\ge 1}$ not equal to $d^*_3(1)=2^\omega$ as prescribed by the result from~\cite{Bertrand-Mathis1989} (which has been transcribed in the introduction) but equal to $d_3(1)=30^\omega$ instead. 

Another example is the following one. We consider the positional numeration system $U$ defined by $(U(0),U(1)) =(1,2)$ and $U(i)=U(i-1)+U(i-2)+1$ for all $i\ge2$. This system is Bertrand since the corresponding numeration language is $\NU=\{0,10\}^*\cup \{0,10\}^*1\cup \{0,10\}^*110^*$. The minimal automaton of this language is depicted in Figure~\ref{Fig : SubNonCanonicalPhi}. The sequence $U$ satisfies~\eqref{Eq : Bertrand} with $(d_n)_{n\ge 1}$ equal to $d_\varphi(1)=110^\omega$. 

\begin{figure}[tb]
\begin{subfigure}{0.45\textwidth}
\begin{center}
\begin{tikzpicture}
\tikzstyle{every node}=[shape=circle, fill=none, draw=black,
minimum size=20pt, inner sep=2pt]
\node(1) at (0,0) {$ $};
\tikzstyle{every node}=[shape=circle, fill=none, draw=black,
minimum size=15pt, inner sep=2pt]
\node(Af) at (0,0) {} ;
\tikzstyle{every path}=[color=black, line width=0.5 pt]
\tikzstyle{every node}=[shape=circle, minimum size=5pt, inner sep=2pt]
\draw [-Latex] (-1,0) to node {} (1);  
\draw [-Latex] (1) to [loop above] node [above=-0.2] {$0,1,2$} (1) ;
\end{tikzpicture}
\caption{$U(i)=3^i$ for $i\ge 0$.}
\label{Fig : SubCanonical3}
\end{center}
\end{subfigure}
\begin{subfigure}{0.5\textwidth}
\begin{center}
\begin{tikzpicture}
\tikzstyle{every node}=[shape=circle, fill=none, draw=black,
minimum size=20pt, inner sep=2pt]
\node(1) at (0,0) {$ $};
\node(2) at (2,0) {$ $} ;
\tikzstyle{every node}=[shape=circle, fill=none, draw=black,
minimum size=15pt, inner sep=2pt]
\node(Af) at (0,0) {} ;
\node(Bf) at (2,0) {} ;
\tikzstyle{every path}=[color=black, line width=0.5 pt]
\tikzstyle{every node}=[shape=circle, minimum size=5pt, inner sep=2pt]
\draw [-Latex] (-1,0) to node {} (1);  
\draw [-Latex] (1) to [loop above] node [above=-0.2] {$0,1,2$} (1) ;
\draw [-Latex] (2) to [loop above] node [above] {$0$} (2) ;
\draw [-Latex] (1) to node [above]
{$3$} (2) ;
\end{tikzpicture}
\caption{$U(0)=1$ and $U(i)=3U(i-1)+1$ for $i \ge 1$.}
\label{Fig : SubNonCanonical3}
\end{center}
\end{subfigure}
\caption{The minimal automata of the languages $\NU$ where $U$ are respectively the canonical and non-canonical Bertrand numeration systems associated with $3$.}
\label{Fig : CounterExBertrand}
\end{figure}
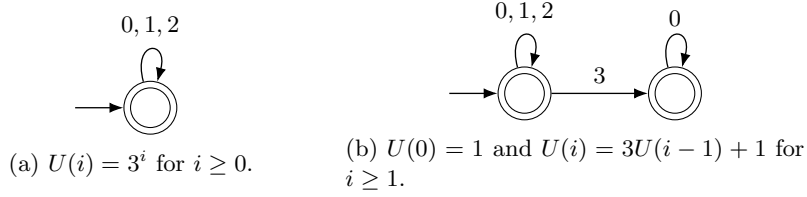

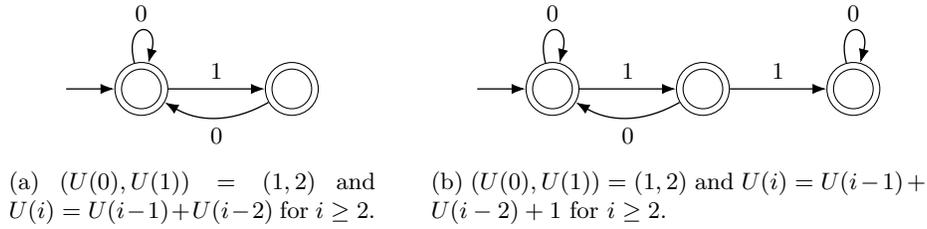
\begin{figure}[tb]
\begin{subfigure}{0.4\textwidth}
\begin{center}
\begin{tikzpicture}
\tikzstyle{every node}=[shape=circle, fill=none, draw=black,minimum size=20pt, inner sep=2pt]
\node(1) at (0,0) {$ $};
\node(2) at (2,0) {$ $} ;
\tikzstyle{every node}=[shape=circle, fill=none, draw=black,minimum size=15pt, inner sep=2pt]
\node(Af) at (0,0) {} ;
\node(Bf) at (2,0) {} ;
\tikzstyle{every path}=[color=black, line width=0.5 pt]
\tikzstyle{every node}=[shape=circle, minimum size=5pt, inner sep=2pt]
\draw [-Latex] (-1,0) to node {} (1);  
\draw [-Latex] (1) to [loop above] node [above] {$0$} (1) ;
\draw [-Latex] (2) to [bend left] node [below] {$0$} (1) ;
\draw [-Latex] (1) to node [above]
{$1$} (2) ;
\end{tikzpicture}
\caption{$(U(0),U(1))=(1,2)$ and $U(i)=U(i-1)+U(i-2)$ for $i\ge2$.}
\label{Fig : SubCanonicalPhi}
\end{center}
\end{subfigure}
\hfill
\begin{subfigure}{0.54\textwidth}
\begin{center}
\begin{tikzpicture}
\tikzstyle{every node}=[shape=circle, fill=none, draw=black,minimum size=20pt, inner sep=2pt]
\node(1) at (0,0) {$ $};
\node(2) at (2,0) {$ $} ;
\node(3) at (4,0) {$ $} ;
\tikzstyle{every node}=[shape=circle, fill=none, draw=black,minimum size=15pt, inner sep=2pt]
\node(Af) at (0,0) {} ;
\node(Bf) at (2,0) {} ;
\node(Cf) at (4,0) {} ;
\tikzstyle{every path}=[color=black, line width=0.5 pt]
\tikzstyle{every node}=[shape=circle, minimum size=5pt, inner sep=2pt]
\draw [-Latex] (-1,0) to node {} (1);  
\draw [-Latex] (1) to [loop above] node [above] {$0$} (1) ;
\draw [-Latex] (2) to [bend left] node [below] {$0$} (1) ;
\draw [-Latex] (1) to node [above]
{$1$} (2) ;
\draw [-Latex] (2) to node [above]{$1$} (3) ;
\draw [-Latex] (3) to [loop above] node [above] {$0$} (3) ;
\end{tikzpicture}
\caption{$(U(0),U(1))=(1,2)$ and $U(i)=U(i-1)+U(i-2)+1$ for $i\ge2$.}
\label{Fig : SubNonCanonicalPhi}
\end{center}
\end{subfigure}
\caption{The minimal automata of the languages $\NU$ where $U$ are respectively the canonical and non-canonical Bertrand numeration systems associated with~$\frac{1+\sqrt{5}}{2}$.}
\label{Fig : NC-Zeckendorf}
\end{figure}
\end{example}
 
We will show that, up to a single exception, the only possible Bertrand numeration systems are given by the recurrence relation~\eqref{Eq : Bertrand} where the sequence of coefficients $(d_n)_{n\ge 1}$ is either equal to $d^*_\beta(1)$ or to $d_\beta(1)$, as is the case of the previous two systems. Before proving our characterization of Bertrand numeration systems, we need some technical results.

\begin{lemma}
\label{Lem : RepUEqualSuff}
The numeration language $\NU$ of a positional numeration system $U$ is equal to $\{a\in A_U^* : \forall i\le |a|,\ \Suff_i(a) \le_{\lex} \rep_U(U(i)-1)\}$.
\end{lemma}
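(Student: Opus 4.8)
The plan is to prove the two inclusions separately, relying throughout on two elementary facts about greedy representations that follow directly from the definition of $\rep_U$. Writing $\val(b_1\cdots b_m)=\sum_{k=1}^m b_k U(m-k)$, the first fact is that, for $a=a_1\cdots a_\ell\in A_U^*$, one has $a\in\NU$ if and only if $\val(\Suff_i(a))<U(i)$ for every $i\le\ell$: reindexing the defining greedy inequalities by the suffix length $i=\ell-j+1$ turns the condition $\sum_{k=j}^\ell a_kU(\ell-k)<U(\ell-j+1)$ into exactly $\val(\Suff_i(a))<U(i)$, and prepending or stripping leading zeros preserves all of these inequalities; in particular $\NU$ is closed under taking suffixes. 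The second fact is that on the length-$i$ words of $\NU$ the orders $\le_{\lex}$ and $\le$ (on values) coincide: if $u\in\NU$ and $u<_{\lex}v$ first disagree at position $p$, then the greedy inequality for $u$ at $p+1$ bounds the tail of $u$ by $U(i-p)-1$, forcing $\val(v)\ge\val(u)+1$, and the reverse implication follows by contraposition. I will also record that $\rep_U(U(i)-1)$ has length exactly $i$ and value $U(i)-1$, since $U(i-1)\le U(i)-1<U(i)$; set $M_i:=\rep_U(U(i)-1)$, the lexicographically greatest length-$i$ word of $\NU$.

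For the inclusion $\NU\subseteq R$, where $R$ denotes the set on the right-hand side of the claimed identity, I take $a\in\NU$ and fix $i\le|a|$. By suffix-closure $\Suff_i(a)\in\NU$, and by the first fact $\val(\Suff_i(a))<U(i)=\val(M_i)+1$, hence $\val(\Suff_i(a))\le\val(M_i)$. As $\Suff_i(a)$ and $M_i$ are both length-$i$ words of $\NU$, the second fact converts this value inequality into $\Suff_i(a)\le_{\lex}M_i$, which is precisely the membership condition for $R$.

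The reverse inclusion $R\subseteq\NU$ is the delicate one, because the per-length equivalence ``$\val(s)<U(i)\iff s\le_{\lex}M_i$'' is \emph{false} for arbitrary $s\in A_U^*$: a lexicographically small word can carry a large value through big trailing digits. The remedy is that membership in $R$ constrains all suffixes simultaneously, which I exploit by induction on the suffix length $i$, proving $\Suff_i(a)\in\NU$ for every $i\le|a|$. The case $i=1$ is immediate since $M_1=U(1)-1$ forces the last letter to be $<U(1)$. For the step, write $s=\Suff_i(a)=b\,t$ with $t=\Suff_{i-1}(a)$, so that $t\in\NU$ with $\val(t)\le U(i-1)-1$ by the induction hypothesis, and use $s\le_{\lex}M_i$. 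If $b$ is strictly below the first letter $m_1$ of $M_i$, then $\val(s)=bU(i-1)+\val(t)\le(m_1-1)U(i-1)+(U(i-1)-1)<m_1U(i-1)\le\val(M_i)<U(i)$. If $b=m_1$, then $s\le_{\lex}M_i$ forces $t\le_{\lex}\Suff_{i-1}(M_i)$; both are length-$(i-1)$ words of $\NU$ (the latter by suffix-closure, the former by the induction hypothesis), so the second fact gives $\val(t)\le\val(\Suff_{i-1}(M_i))$ and thus $\val(s)=m_1U(i-1)+\val(t)\le\val(M_i)=U(i)-1<U(i)$. In both cases $\val(\Suff_i(a))<U(i)$, completing the induction; taking $i=|a|$ and invoking the first fact yields $a\in\NU$.

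I expect the main obstacle to be exactly this failure of the naive per-length equivalence, which rules out a term-by-term comparison and makes the coupling across suffix lengths indispensable. The induction is the device that transmits the value bound from the shorter suffix, already known to be a greedy representation, to the longer one through the single lexicographic comparison against $M_i$; getting the two-case analysis and the bookkeeping with $\val(M_i)=U(i)-1$ right is the only genuinely technical point.
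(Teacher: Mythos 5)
Your proof is correct and rests on the same key fact the paper invokes, namely that $\rep_U(U(i)-1)$ is the lexicographically greatest length-$i$ word of $\NU$; the paper's own proof is a one-line appeal to this fact, whereas you supply the full details, including the genuinely non-trivial converse inclusion via induction on suffix length. No gaps.
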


\begin{proof}
The result follows from the fact that $\rep_U(U(i)-1)$ is the lexicographically greatest word of length $i$ in $\NU$.
\end{proof}

\begin{lemma}
\label{Lem : FacBertrand}
The numeration language $\NU$ of a Bertrand numeration system $U$ is factorial, that is,\ $\Fac(\NU)=\NU$.
\end{lemma}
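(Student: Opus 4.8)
The plan is to reduce factoriality to two simpler closure properties. Since $\NU \subseteq \Fac(\NU)$ holds trivially, it suffices to prove the reverse inclusion $\Fac(\NU) \subseteq \NU$. The key structural remark is that a factor is always a suffix of a prefix: if $u$ is a factor of some $w \in \NU$, write $w = xuy$, so that $xu$ is a prefix of $w$ and $u$ is a suffix of $xu$. Consequently, it is enough to show that $\NU$ is closed under taking prefixes and closed under taking suffixes; the two together immediately yield closure under factors.

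For prefix-closedness I would simply invoke the hypothesis: by the very definition of a Bertrand numeration system recorded in~\eqref{Eq : BertrandCondition}, the language $\NU$ is prefix-closed. This is the only place where the Bertrand assumption is actually used; it is worth noting that prefix-closedness may genuinely fail for an arbitrary positional numeration system (cf.\ the second system of Example~\ref{Ex : CounterExampleBertrand}, where $50\in\NU$ but $5\notin\NU$).

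The substance of the proof is suffix-closedness, which I would extract from Lemma~\ref{Lem : RepUEqualSuff}. Let $a \in \NU$ and let $s = \Suff_k(a)$ be a suffix of $a$ with $k \le |a|$; then $s \in A_U^*$ and $|s| = k$. To apply the lemma to $s$, I must verify that $\Suff_i(s) \le_{\lex} \rep_U(U(i)-1)$ for every $i \le k$. The crucial bookkeeping observation is that a suffix of a suffix is a suffix of the original word: for $i \le k$ one has $\Suff_i(s) = \Suff_i(a)$. Since $a \in \NU$, Lemma~\ref{Lem : RepUEqualSuff} already guarantees $\Suff_i(a) \le_{\lex} \rep_U(U(i)-1)$ for all $i \le |a|$, hence in particular for all $i \le k$. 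Thus $s$ satisfies the suffix condition of the lemma, so $s \in \NU$.

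Combining prefix- and suffix-closedness with the reduction of the first paragraph gives $\Fac(\NU) \subseteq \NU$, and therefore $\Fac(\NU) = \NU$. I do not anticipate a serious obstacle: once Lemma~\ref{Lem : RepUEqualSuff} is available, its purely suffix-based description of $\NU$ makes suffix-closedness almost automatic, the only delicate point being the elementary identity $\Suff_i(\Suff_k(a)) = \Suff_i(a)$ for $i \le k$. The one conceptual subtlety to flag is the asymmetry of the two closure properties: suffix-closedness holds for every positional numeration system, whereas prefix-closedness is exactly the feature supplied by the Bertrand hypothesis.
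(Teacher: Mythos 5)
Your proof is correct and follows essentially the same route as the paper's: both reduce factoriality to prefix-closedness (supplied by the Bertrand hypothesis) plus suffix-closedness (a property of every positional numeration system). The only difference is presentational — the paper simply asserts that every positional numeration system has a suffix-closed numeration language, whereas you derive this from Lemma~\ref{Lem : RepUEqualSuff}, which is a reasonable way to make that step explicit.
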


\begin{proof} 
The fact that $\NU$ is prefix-closed comes from the definition of a Bertrand numeration system. Since any positional numeration system has a suffix-closed numeration language, the conclusion follows.
\end{proof}

\begin{lemma}
\label{Lem : ConvergenceBertrand}
A positional numeration system $U$ is Bertrand if and only if there exists an infinite word $a$ over $A_U$ such that $\rep_U(U(i)-1)=\Pref_i(a)$ for all $i\ge 0$. In this case, we have $\sigma^i(a)\le_{\lex} a$ for all $i\ge 1$.
\end{lemma}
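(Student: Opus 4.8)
The plan is to reformulate everything in terms of the lexicographically greatest length-$i$ word of $\NU$, which by Lemma~\ref{Lem : RepUEqualSuff} is exactly $\rep_U(U(i)-1)$. The existence of an infinite word $a$ with $\rep_U(U(i)-1)=\Pref_i(a)$ for all $i$ is equivalent to the chain condition that each $\rep_U(U(i)-1)$ is a prefix of $\rep_U(U(i+1)-1)$: indeed, if such an $a$ exists then $\rep_U(U(i)-1)=\Pref_i(a)=\Pref_i(\Pref_{i+1}(a))=\Pref_i(\rep_U(U(i+1)-1))$, and conversely a prefix-nested sequence of finite words of strictly increasing length converges in the prefix distance to a unique infinite word $a$ with the required prefixes. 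So the whole equivalence reduces to showing that $U$ is Bertrand if and only if $\rep_U(U(i)-1)$ is a prefix of $\rep_U(U(i+1)-1)$ for every $i$.

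For the forward implication, assume $U$ is Bertrand. Prefix-closure of $\NU$ gives that $\Pref_i(\rep_U(U(i+1)-1))$ is a length-$i$ word of $\NU$, hence $\Pref_i(\rep_U(U(i+1)-1))\le_{\lex}\rep_U(U(i)-1)$ by maximality. For the reverse inequality I would use prolongability: since $\rep_U(U(i)-1)\in\NU$, the word $\rep_U(U(i)-1)\,0$ also lies in $\NU$, so by maximality $\rep_U(U(i)-1)\,0\le_{\lex}\rep_U(U(i+1)-1)$; comparing these two length-$(i+1)$ words and noting that the trailing letter of the left one is the least letter of $A_U$, a short case analysis on the first position of difference yields $\rep_U(U(i)-1)\le_{\lex}\Pref_i(\rep_U(U(i+1)-1))$. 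The two inequalities force the equality $\rep_U(U(i)-1)=\Pref_i(\rep_U(U(i+1)-1))$, which is the desired prefix relation.

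For the converse, assume the prefix relation holds and use the explicit description of $\NU$ from Lemma~\ref{Lem : RepUEqualSuff}. Fixing $v\in A_U^*$, I would show that the family of suffix inequalities $\Suff_i(v)\le_{\lex}\rep_U(U(i)-1)$ defining membership of $v$ is equivalent to the corresponding family for $v0$. The point is that $\Suff_{i+1}(v0)=\Suff_i(v)\,0$, while the prefix relation gives $\rep_U(U(i)-1)=\Pref_i(\rep_U(U(i+1)-1))$; comparing the length-$(i+1)$ words $\Suff_i(v)\,0$ and $\rep_U(U(i+1)-1)$ through their length-$i$ prefixes shows $\Suff_i(v)\,0\le_{\lex}\rep_U(U(i+1)-1)$ if and only if $\Suff_i(v)\le_{\lex}\rep_U(U(i)-1)$ (again using that $0$ is the least letter). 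Together with the trivial extra condition $\Suff_1(v0)=0\le_{\lex}\rep_U(U(1)-1)$, this establishes $v\in\NU\iff v0\in\NU$, i.e.\ the Bertrand condition~\eqref{Eq : BertrandCondition}.

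Finally, for the last assertion $\sigma^i(a)\le_{\lex}a$, the key identity is $\Pref_n(\sigma^i(a))=a_{i+1}\cdots a_{i+n}=\Suff_n(\Pref_{i+n}(a))=\Suff_n(\rep_U(U(i+n)-1))$. Since $\rep_U(U(i+n)-1)\in\NU$, Lemma~\ref{Lem : RepUEqualSuff} bounds this suffix by $\rep_U(U(n)-1)=\Pref_n(a)$, so $\Pref_n(\sigma^i(a))\le_{\lex}\Pref_n(a)$ for every $n$; passing to the first position of difference (if any) gives $\sigma^i(a)\le_{\lex}a$. I expect the main obstacle to be the careful lexicographic bookkeeping in the two reductions above—recognizing that appending or deleting a trailing $0$ corresponds exactly, via the prefix relation between consecutive words $\rep_U(U(i)-1)$, to adding or removing a single suffix inequality—rather than any deep structural difficulty.
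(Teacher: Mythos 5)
Your proposal is correct and follows essentially the same route as the paper: both directions reduce to the prefix-chain condition on the words $\rep_U(U(i)-1)$ via Lemma~\ref{Lem : RepUEqualSuff}, with prefix-closure and prolongability each supplying one lexicographic inequality, and the converse handled by the equivalence of the suffix conditions for $v$ and $v0$. The only cosmetic difference is in the final assertion, which you prove directly by bounding $\Suff_n(\rep_U(U(i+n)-1))$ whereas the paper argues by contradiction using the factoriality of $\NU$ (Lemma~\ref{Lem : FacBertrand}); the two arguments are interchangeable.
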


\begin{proof}
In order to get the necessary condition, it suffices to show that if $U$ is a Bertrand numeration system then for all $i\ge 1$, $\rep_U(U(i)-1)$ is a prefix of $\rep_U((U(i+1)-1)$. Let thus $i\ge 1$, and write $\rep_U(U(i)-1)=a_1\cdots a_i$ and $\rep_U(U(i+1)-1)=b_1\cdots b_{i+1}$. On the one hand, since $b_1\cdots b_i\in\NU$, we get $b_1\cdots b_i\le_{\lex}a_1\cdots a_i$. On the other hand, since $a_1\cdots a_i0\in\NU$, we get $a_1\cdots a_i0\le_{\lex}b_1\cdots b_{i+1}$, hence $a_1\cdots a_i\le_{\lex}b_1\cdots b_i$. 

Conversely, suppose that there exists an infinite word $a$ over $A_U$ such that $\rep_U(U(i)-1)=\Pref_i(a)$ for all $i\ge 0$. It is easily seen that for all $w\in A_U^*$ and all $i\in\{0,\ldots,|w|\}$, we have $\Suff_i(w)\le_{\lex}\Pref_i(a)$ if and only if $\Suff_{i+1}(w0)\le_{\lex}\Pref_{i+1}(a)$. Then we get that $U$ is Bertrand by Lemma~\ref{Lem : RepUEqualSuff}.

We now turn to the last part of the statement and we prove that $\sigma^i(a)\le_{\lex} a$ for all $i\ge1$. Suppose to the contrary that there exists $i\ge 1$ such that $\sigma^i(a)>_{\lex} a$. Then there exists $\ell\ge 1$ such that $a_i\cdots a_{i+\ell-1}>_{\lex} a_1\cdots a_\ell$, where $a=a_1a_2\cdots$. This is impossible since $a_i\cdots a_{i+\ell-1}\in\NU$ by Lemma~\ref{Lem : FacBertrand}.
\end{proof}

\begin{lemma}
\label{Lem : Bertrand}
Let $a$ be an infinite word over $\N$ such that $\sigma^i(a) \le_{\lex} a$ for all $i\ge 1$. If $a$ is not periodic, then we define $d=a$; otherwise we let $n\ge 1$ be the smallest integer such that $a=(a_1\cdots a_n)^\omega$ and we define $d=a_1 \cdots a_{n-1} (a_n+1) 0^\omega$. Then in both cases, we have $\sigma^i(d) <_{\lex} d$ for all $i\ge 1$.
\end{lemma}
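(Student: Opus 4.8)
The plan is to handle the two cases in the definition of $d$ separately. The non-periodic case is immediate: there $d=a$, and since $\sigma^i(a)=a$ holds precisely when $a$ is purely periodic of period $i$ (that is, $a_{j+i}=a_j$ for all $j\ge 1$), the non-periodicity of $a$ guarantees $\sigma^i(a)\neq a$ for every $i\ge 1$; combined with the hypothesis $\sigma^i(a)\le_{\lex}a$ this yields $\sigma^i(a)<_{\lex}a$, as required.

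So assume $a=(a_1\cdots a_n)^\omega$ with $n$ minimal, and $d=a_1\cdots a_{n-1}(a_n+1)0^\omega$. For $i\ge n$ we have $\sigma^i(d)=0^\omega$, whereas $d\neq 0^\omega$ since its letter in position $n$ is $a_n+1\ge 1$; hence $\sigma^i(d)<_{\lex}d$. For $1\le i\le n-1$, put $m=n-i$ and compare $\sigma^i(d)=a_{i+1}\cdots a_{n-1}(a_n+1)0^\omega$ with $d$ letter by letter. Because $n$ is the minimal period, $\sigma^i(a)\neq a$, so the hypothesis gives $\sigma^i(a)<_{\lex}a$; let $\ell$ be the position of their first difference, so that $a_{i+j}=a_j$ for $j<\ell$ and $a_{i+\ell}<a_\ell$. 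The comparison of $\sigma^i(d)$ and $d$ hinges on whether $\ell\le m$ or $\ell>m$.

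The crux of the proof, and the step I expect to be the main obstacle, is to rule out $\ell>m$. Indeed $\ell>m$ would mean $a_{i+1}\cdots a_n=a_1\cdots a_m$, i.e.\ that the period $p=a_1\cdots a_n$ has a border of length $m$; then position $m$ would give $\sigma^i(d)_m=a_n+1>a_m=d_m$ and the inequality would point the wrong way. I would exclude this by showing that $p$ is unbordered. Assuming a border of length $m$, so that $u:=a_1\cdots a_m=a_{n-m+1}\cdots a_n$, one checks that $\sigma^{n-m}(a)=u\,a$ while $a=u\,\sigma^m(a)$; the hypothesis $\sigma^{n-m}(a)\le_{\lex}a$ then forces $a\le_{\lex}\sigma^m(a)$, and together with $\sigma^m(a)\le_{\lex}a$ this gives $\sigma^m(a)=a$, so $a$ would have period $m<n$, contradicting the minimality of $n$. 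Hence $\ell\le m$, and the conclusion is then routine: if $\ell\le m-1$ then $\sigma^i(d)$ and $d$ first differ at position $\ell$, where $\sigma^i(d)_\ell=a_{i+\ell}<a_\ell=d_\ell$; if $\ell=m$ then $a_n<a_m$, so $a_n+1\le a_m$, and either $a_n+1<a_m$ settles the comparison at position $m$, or $a_n+1=a_m$, in which case $\sigma^i(d)$ and $d$ agree through position $m$ while beyond it $\sigma^i(d)$ is constantly $0$ and $d$ still carries the nonzero letter $a_n+1$ in position $n$. In every case $\sigma^i(d)<_{\lex}d$.
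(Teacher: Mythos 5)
Your proof is correct and follows essentially the same route as the paper's: both reduce the case $1\le i\le n-1$ to showing that the equality $a_{i+1}\cdots a_n=a_1\cdots a_{n-i}$ (your border of length $m$) would force a period smaller than $n$, using the hypothesis $\sigma^j(a)\le_{\lex}a$ together with the minimality of $n$. The only organizational difference is that you argue directly via the first position of disagreement between $\sigma^i(a)$ and $a$ and derive $\sigma^{n-i}(a)=a$, while the paper argues by contradiction from $\sigma^i(d)\ge_{\lex}d$ and derives $\sigma^i(a)=a$; the content is the same.
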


\begin{proof}
The case where $a$ is not periodic is straightforward. Suppose that $a$ is periodic. 
If $i\ge n$, then $\sigma^i(d) = 0^\omega <_{\lex} d$.
For $i$ with $1\le i\le n-1$, proceed by contradiction and suppose that $\sigma^i(d) \ge_{\lex} d$, that is, $a_{i+1}\cdots a_{n-1}(a_n+1)0^\omega \ge_{\lex} a_1a_2\cdots a_{n-1}(a_n+1)0^\omega$. Then $a_{i+1}\cdots a_{n-1}(a_n+1)>_{\lex}a_1\cdots a_{n-i}$. By hypothesis on $a$, we also have $a_{i+1}\cdots a_{n-1}a_n\le_{\lex}a_1\cdots a_{n-i}$. Thus, we get $a_{i+1}\cdots a_{n-1}a_n=a_1\cdots a_{n-i}$. Moreover, by assumption on $a$, we have $\sigma^n(a)=a\ge_{\lex}\sigma^{n-i}(a)$. We then obtain that 
\[
	\sigma^i(a)	
	=a_{i+1}\cdots a_n\sigma^n(a)
	\ge_{\lex} a_1\cdots a_{n-i}\sigma^{n-i}(a)
	=a.
\] 
Since $\sigma^i(a)\le_{\lex} a$ by hypothesis, we get $\sigma^i(a)	=a$, which is impossible since $i<n$ and $n$ was chosen to be minimal for this property. 
\end{proof}

Finally, we recall the so-called Renewal theorem as stated in~\cite[Theorem 1 on p. 330]{Walters1982}; also see~\cite[Theorem 0.18]{Feller1957}.

\begin{theorem}[Renewal theorem]
\label{Thm : Renewal}
Let $(c_n)_{n\ge 0}$ and $(u_n)_{n\ge 0}$ be bounded sequences of real numbers with $0\le c_n\le 1$ and $d_n\ge 0$ for all $n$. Suppose the greatest common divisor of all integers $n$ with $c_n>0$ is $1$. Suppose $(u_n)_{n\ge 0}$ satisfies the recurrence relation $u_n=d_n+c_0u_n+c_1u_{n-1}+\cdots+c_nu_0$ for all $n$. If $\sum_{n\ge 0}c_n=1$ and $\sum_{n\ge 0}d_n<\infty$ then $\lim_{n\to\infty} u_n=(\sum_{n\ge 0}d_n)(\sum_{n\ge 0}nc_n)^{-1}$ where this is interpreted as zero if $\sum_{n\ge 0}nc_n=\infty$.
\end{theorem}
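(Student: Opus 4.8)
The plan is to recognize this as the classical discrete renewal theorem of Erd\H{o}s--Feller--Pollard, and to prove it in two stages: first a reduction, via generating functions, to the behaviour of a \emph{pure renewal sequence}, and then the core aperiodic renewal limit. Throughout, the hypothesis that the greatest common divisor of $\{n : c_n > 0\}$ equals $1$ will play the decisive role of ruling out oscillation.

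First I would normalize the recurrence so that the convolution kernel is a probability distribution supported on the positive integers. Since each $c_n \ge 0$ and $\sum_n c_n = 1$, the gcd hypothesis forces $c_0 < 1$ (if $c_0 = 1$ then all other $c_n$ vanish and the support is $\{0\}$, whose gcd is not $1$). Hence I can move the term $c_0 u_n$ to the left and divide by $1-c_0$: setting $\tilde c_0 = 0$, $\tilde c_k = c_k/(1-c_0)$ for $k \ge 1$, and $\tilde d_n = d_n/(1-c_0)$, the recurrence becomes $u_n = \tilde d_n + \sum_{k=1}^n \tilde c_k u_{n-k}$, where $(\tilde c_k)_{k\ge 1}$ is a probability distribution on the positive integers whose support still has gcd $1$ (removing $0$ does not change the gcd), and $\sum_n \tilde d_n < \infty$. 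The asserted limit is unchanged, since $\sum_n n\tilde c_n = (\sum_n n c_n)/(1-c_0)$ and $\sum_n \tilde d_n = (\sum_n d_n)/(1-c_0)$. Passing to generating functions $U(x) = \sum_n u_n x^n$, $C(x) = \sum_n \tilde c_n x^n$, $D(x) = \sum_n \tilde d_n x^n$, the renewal equation reads $U = D + C\,U$, whence $U(x) = D(x)\,V(x)$ with $V(x) = (1-C(x))^{-1} = \sum_{j\ge 0} C(x)^j$. Its coefficients $v_m$ are the renewal probabilities, lie in $[0,1]$, and satisfy $u_n = \sum_{k=0}^n \tilde d_k\, v_{n-k}$, so the whole problem reduces to the pure renewal sequence $(v_m)$.

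The heart of the argument, and the step I expect to be the main obstacle, is the Erd\H{o}s--Feller--Pollard theorem: under aperiodicity one has $v_m \to 1/\mu$, where $\mu = \sum_n n\tilde c_n$ and $1/\mu := 0$ when $\mu = \infty$. I would approach this through the renewal identity $\sum_{k=0}^m v_k\, r_{m-k} = 1$, where $r_j = \sum_{\ell > j}\tilde c_\ell$ are the tail probabilities; at the level of generating functions this is just $V(x)(1-C(x)) = 1$ together with $(1-x)\sum_j r_j x^j = 1 - C(x)$, while $\sum_j r_j = \mu$. The content is that aperiodicity, i.e.\ exactly the gcd condition, prevents the $v_m$ from splitting into periodic classes and thereby forces $\liminf_m v_m = \limsup_m v_m = 1/\mu$; a clean alternative is a coupling of two delayed renewal processes, where the gcd hypothesis guarantees an almost surely finite coupling time.

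Granting the renewal limit, the conclusion follows from the standard fact that convolving a convergent sequence with an $\ell^1$ sequence converges to the product of the limit and the sum. Concretely, I would split $u_n = \sum_{k=0}^n \tilde d_k\, v_{n-k}$ at a large index $K$: the head $\sum_{k\le K}\tilde d_k\, v_{n-k}$ tends to $(1/\mu)\sum_{k\le K}\tilde d_k$ as $n\to\infty$ (finitely many factors, each $v_{n-k}\to 1/\mu$), while the tail is bounded by $\sum_{k>K}\tilde d_k$ since $v \le 1$. Letting first $n\to\infty$ and then $K\to\infty$ yields $u_n \to (1/\mu)\sum_k \tilde d_k = (\sum_n d_n)/(\sum_n n c_n)$, which is the claimed value.
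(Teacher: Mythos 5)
The paper does not prove this statement at all: it is recalled verbatim from the literature (Walters, \emph{An Introduction to Ergodic Theory}, Theorem~1 on p.~330; see also Feller), so there is no in-paper proof to compare your attempt against. Judged on its own, your proposal has the right architecture and the outer layers are correct and essentially complete: the normalization eliminating $c_0$ (including the observation that the gcd hypothesis forces $c_0<1$ and that discarding the index $0$ does not change the gcd, and the check that the claimed limit is invariant under the rescaling), the reduction via $U=D+CU$ to $u_n=\sum_{k=0}^n\tilde d_k v_{n-k}$ with $0\le v_m\le 1$, and the final split of the convolution at a large index $K$ using $\sum_{k>K}\tilde d_k\to 0$ are all sound.

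The genuine gap is the middle step, which is where the entire content of the theorem lives: you assert the Erd\H{o}s--Feller--Pollard limit $v_m\to 1/\mu$ but do not prove it. Saying that aperiodicity ``prevents the $v_m$ from splitting into periodic classes and thereby forces $\liminf=\limsup=1/\mu$'' is a description of the conclusion, not an argument; likewise, naming the coupling of two delayed renewal processes without constructing the coupling or proving the coupling time is almost surely finite leaves the claim unestablished. A complete proof along your first route would have to (i) take $\lambda=\limsup_m v_m$, realized along a subsequence $m_j$, and use the renewal recursion together with a Fatou/compactness argument to show $v_{m_j-k}\to\lambda$ for every $k$ in the additive semigroup generated by the support of $\tilde c$, (ii) invoke the gcd hypothesis to conclude this semigroup contains all sufficiently large integers, and (iii) feed this into the identity $\sum_{k=0}^m v_k r_{m-k}=1$ (which you correctly state) to get $\lambda\mu\le 1$, with the symmetric argument for the liminf giving the reverse inequality, plus a separate treatment of the case $\mu=\infty$. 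None of this is routine, and until it is written out the proposal is a reduction to the theorem rather than a proof of it.
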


For a real number $\beta>1$, we define
\[
	S'_\beta=\{w\in \{0,\ldots,\floor{\beta}\}^\omega : \forall i\ge 0,\ \sigma^i(w)\le_{\lex} d_{\beta}(1) \}.
\]
We are now ready to show the claimed correction of Bertrand-Mathis' result.

\begin{theorem}  
\label{Thm : NewBertrand}
A positional numeration system $U$ is Bertrand if and only if one of the following occurs.
\begin{enumerate}
\item For all $i\ge 0$, $U(i)=i+1$.
\item There exists a real number $\beta>1$ such that
$\NU=\Fac(S_\beta)$. 
\item There exists a real number $\beta>1$ such that
$\NU=\Fac(S'_\beta).$
\end{enumerate}
Moreover, in Case $2$ (resp.\ Case $3$), the following hold:
\begin{enumerate}
\item[a.] There is a unique such $\beta$.
\item[b.] The alphabet $A_U$ equals $\{0,\ldots,\ceil{\beta}-1\}$ (resp.\ $\{0,\ldots,\floor{\beta}\}$).
\item[c.] We have
\begin{equation}
\label{Eq : NewBertrand}
U(i)=a_1 U(i-1)+a_2 U(i-2)+ \cdots + a_iU(0)+1
\end{equation}
for all $i\ge 0$ and 
\begin{equation}
\label{Eq : Limit}
	\lim_{i\to\infty}\frac{U(i)}{\beta^i}
	=\frac{\beta}{(\beta-1)\sum_{i=1}^\infty ia_i\beta^{-i}}	
\end{equation}
where $(a_i)_{i\ge 1}$ is $d_{\beta}^*(1)$ (resp.\ $d_{\beta}(1)$).
\item[d.] The system $U$ has the dominant root $\beta$, i.e., $\lim_{i\to\infty}\frac{U(i+1)}{U(i)}=\beta$. 
\end{enumerate} 
\end{theorem}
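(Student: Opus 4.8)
The plan is to prove the equivalence by organizing everything around the lexicographically greatest words $\rep_U(U(i)-1)$ and the infinite word $a$ furnished by Lemma~\ref{Lem : ConvergenceBertrand}. The reverse implications (any of Cases 1--3 gives a Bertrand system) are the easy direction: Case~1 is the unary-like system $U(i)=i+1$, where $\NU=1^*0^*$ is directly checked against~\eqref{Eq : BertrandCondition}; for Cases~2 and~3 one uses that $\Fac(S_\beta)$ and $\Fac(S'_\beta)$ are factorial languages closed under appending and deleting trailing zeros, which is exactly condition~\eqref{Eq : BertrandCondition}. So the substance is the forward direction: \emph{assume $U$ is Bertrand} and produce the base $\beta$ together with the structural conclusions (a)--(d).

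First I would extract from Lemma~\ref{Lem : ConvergenceBertrand} the infinite word $a$ with $\rep_U(U(i)-1)=\Pref_i(a)$ and $\sigma^i(a)\le_{\lex}a$ for all $i\ge1$, and then feed $a$ into Lemma~\ref{Lem : Bertrand} to obtain the word $d$ satisfying the \emph{strict} inequality $\sigma^i(d)<_{\lex}d$. The key dichotomy is whether $a$ is eventually $0^\omega$, strictly periodic, or neither. If $a=0^\omega$ then $U(i)=i+1$ for all $i$, landing in Case~1. Otherwise the admissibility condition $\sigma^i(d)<_{\lex}d$ is precisely the condition (due to Parry) for $d$ to be the $\beta$-expansion $d_\beta(1)$ of $1$ for a \emph{unique} real $\beta>1$; this is where I would invoke the standard bijection between self-admissible sequences and the expansions of $1$ to define $\beta$ and simultaneously settle uniqueness, giving part~(a). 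The two cases of Lemma~\ref{Lem : Bertrand} then correspond exactly to the two cases of the theorem: when $a$ is non-periodic (so $d=a=d_\beta^*(1)$) one gets $\NU=\Fac(S_\beta)$ (Case~2), and when $a$ is periodic (so $a=d_\beta^*(1)$ and $d=d_\beta(1)$) one gets $\NU=\Fac(S'_\beta)$ (Case~3). The identification $\NU=\Fac(S_\beta)$ or $\Fac(S'_\beta)$ follows from Lemma~\ref{Lem : RepUEqualSuff}, since membership in $\NU$ is governed by the suffixes being $\le_{\lex}$ the words $\rep_U(U(i)-1)=\Pref_i(a)$, which is exactly the defining shift-condition of $S_\beta$ (resp.\ $S'_\beta$) read through its factors. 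Part~(b), the alphabet $A_U=\{0,\ldots,\ceil{\beta}-1\}$ (resp.\ $\{0,\ldots,\floor{\beta}\}$), falls out from the largest admissible digit in $d_\beta^*(1)$ versus $d_\beta(1)$.

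For part~(c), the recurrence~\eqref{Eq : NewBertrand} is obtained by the familiar counting argument: $U(i)=\#\{w\in\NU:|w|\le i\}$ equivalently $U(i)$ counts length-$i$ words in the shift, and splitting according to the longest common prefix with $a=(a_j)$ yields $U(i)=\sum_{j=1}^{i}a_jU(i-j)+1$, which is~\eqref{Eq : NewBertrand} with $(a_j)=d_\beta^*(1)$ or $d_\beta(1)$ as appropriate. Parts~(c)'s limit~\eqref{Eq : Limit} and~(d) are where the Renewal theorem enters and constitute the main analytic obstacle. Setting $u_i=U(i)\beta^{-i}$, dividing~\eqref{Eq : NewBertrand} by $\beta^i$ turns it into the renewal recurrence $u_i=\beta^{-i}+\sum_{j\ge0}c_j u_{i-j}$ with $c_j=a_j\beta^{-j}$ (and $c_0=0$); the normalization $\sum_{j\ge1}a_j\beta^{-j}=1$, which encodes exactly $\sum a_j\beta^{-j}=d_\beta(1)\text{ evaluated at }1$, gives $\sum_j c_j=1$, and $\sum_i\beta^{-i}=\frac{\beta}{\beta-1}<\infty$ supplies the summable forcing term. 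Theorem~\ref{Thm : Renewal} then yields $\lim_i u_i=\bigl(\sum_i\beta^{-i}\bigr)\bigl(\sum_j j c_j\bigr)^{-1}=\frac{\beta}{(\beta-1)\sum_{j\ge1}ja_j\beta^{-j}}$, which is~\eqref{Eq : Limit}; the aperiodicity hypothesis of the Renewal theorem (gcd of $\{j:c_j>0\}$ equals $1$) must be verified, and I expect this gcd condition together with ensuring the limit is a finite positive number (so that~(d) gives a genuine dominant root $\beta$ rather than a degenerate ratio) to be the subtle technical point. Finally, part~(d) follows since $\frac{U(i+1)}{U(i)}=\frac{u_{i+1}}{u_i}\beta\to\beta$ once $\lim_i u_i$ exists and is nonzero, closing the argument.
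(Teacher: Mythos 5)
Your overall architecture matches the paper's proof almost exactly: Lemma~\ref{Lem : ConvergenceBertrand} produces $a$, Lemma~\ref{Lem : Bertrand} produces $d$, Parry's admissibility criterion turns $d$ into $d_\beta(1)$ for a unique $\beta$, Lemma~\ref{Lem : RepUEqualSuff} identifies $\NU$ with the factors of a shift, and the Renewal theorem gives~\eqref{Eq : Limit} and the dominant root (your treatment of that last step is in fact more explicit than the paper's). But there is a genuine error in the case assignment, and it sits exactly at the point the theorem is meant to correct. The numeration language is $\Fac(\{w:\forall i,\ \sigma^i(w)\le_{\lex}a\})$, so whether you land in $S_\beta$ or $S'_\beta$ is decided by whether $a$ equals $d_\beta^*(1)$ or $d_\beta(1)$ --- by $a$, not by $d$. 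When $a$ is periodic, say $a=(a_1\cdots a_n)^\omega$, Lemma~\ref{Lem : Bertrand} gives $d=a_1\cdots a_{n-1}(a_n+1)0^\omega=d_\beta(1)$ and hence $a=d_\beta^*(1)$: this is the \emph{canonical} Case~2, $\NU=\Fac(S_\beta)$. When $a$ is not periodic, $d=a=d_\beta(1)$, and if moreover $a$ ends in $0^\omega$ (so $\beta$ is a simple Parry number and $d_\beta(1)\ne d_\beta^*(1)$) this is the \emph{non-canonical} Case~3, $\NU=\Fac(S'_\beta)$. You have it the other way around (``non-periodic $\Rightarrow$ Case~2'', ``periodic $\Rightarrow$ Case~3''), apparently because you read the shift off $d$ rather than off $a$; with your assignment the Zeckendorf system, for which $a=(10)^\omega$ is periodic, would satisfy $\NU=\Fac(S'_\varphi)$, which is false since $11\in\Fac(S'_\varphi)$ but $11\notin\NU$. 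Your parenthetical ``$d=a=d_\beta^*(1)$'' in the non-periodic branch is also wrong: Parry's criterion applied to $d$ yields $d=d_\beta(1)$, and this equals $d_\beta^*(1)$ only when $d$ does not end in $0^\omega$, i.e., precisely when the two shifts coincide anyway.

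A smaller slip: the degenerate case is $a=10^\omega$, not $a=0^\omega$ (one always has $a_1=\rep_U(U(1)-1)\ge1$), and the corresponding language is $0^*10^*$, not $1^*0^*$ (e.g., $11\notin\NU$ here since $\rep_U(3)=100$). Everything else --- the backward direction, parts (a) and (b), the counting argument for~\eqref{Eq : NewBertrand}, and the verification of the hypotheses of Theorem~\ref{Thm : Renewal} (in particular $\sum_{j\ge1} a_j\beta^{-j}=1$ and the finiteness of $\sum_{j\ge1} ja_j\beta^{-j}$, which makes the limit positive and yields part (d)) --- is sound and in line with the paper.
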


\begin{proof}
Let $U$ be a positional numeration system. We start with the backward direction. If $U(i)=i+1$ for all $i\ge 0$, then $\NU=0^*10^*$, hence $U$ is Bertrand. Otherwise, for the sake of clarity, write $S=\{w\in \N^\omega : \forall i\ge 0,\ \sigma^i(w)\le_{\lex} a \}$ with $a=d_\beta(1)$ or $a=d_\beta^*(1)$ as in the statement.  Suppose that $\NU=\Fac(S)$. We show that $U$ is Bertrand. Consider $y\in \NU$. There exist words $x\in \N^*$ and $z\in \N^\omega$ such that $xyz\in S$. Since $\sigma^i(xy0^\omega)\le_{\lex}\sigma^i(xyz)$ for all $i\ge 0$, we get that $xy0^\omega \in S$. Therefore $y0\in \NU$. The converse is immediate since if $y0 \in \Fac(S)$ then $y\in \Fac(S)$ as well.

Conversely, suppose that $U$ is Bertrand. By Lemma~\ref{Lem : ConvergenceBertrand}, there exists $a=a_1a_2\cdots$ such that $\rep_U(U(i)-1)=\Pref_i(a)$ and $\sigma^i(a)\le_{\lex} a$ for all $i\ge 0$. In particular, we have $a_1\ge1$ and $a_i\le a_1$ for all $i\ge 1$. If $a=10^\omega$ then $U(i)=i+1$ for all $i\ge 0$. Otherwise, let us define a new sequence $d$ from $a$. If $a$ is not (purely) periodic, define $d=a$. If $a$ is periodic and $n$ is the smallest positive integer such that $a=(a_1\cdots a_n)^\omega$, we set $d=a_1\cdots a_{n-1}(a_n+1)0^\omega$. By Lemma~\ref{Lem : Bertrand}, in both cases, we get $\sigma^i(d) <_{\lex} d$ for all $i\ge 1$. We also get $d_i\le d_1$ for all $i\ge 1$. Moreover, we have $d_1\ge1$ and $d\ne 10^\omega$ (for otherwise $a$ would also be equal to $10^\omega$). Then there exists a unique $\beta>1$ such that $d=d_\beta(1)$; see~\cite{Parry1960} or~\cite[Corollary 7.2.10]{Lothaire2002}. Also, we know that $d_\beta^*(1)=(t_1 \cdots t_{n-1} (t_n-1))^\omega$ whenever $d_\beta(1)=t_1\cdots t_n 0^\omega$ with $n\ge 1$ and $t_n \ne 0$, and that $d_\beta^*(1)=d_\beta(1)$ otherwise; again, see~\cite{Lothaire2002,Parry1960}. We get that either $a=d_\beta(1)$ or $a=d_\beta^*(1)$ depending on the periodicity of $a$. Let us show that $\NU=\Fac(\{ w \in\N^\omega : \forall i\ge 0,\ \sigma^i(w) \le_{\lex} a\}$. Consider $y\in\NU$. By Lemma~\ref{Lem : RepUEqualSuff}, we have $\Suff_i(y)\le_{\lex}\Pref_i(a)$ for all $i\le |y|$. Therefore, $\sigma^i(y0^\omega)\le_{\lex} a$ for all $i\ge 0$. Conversely, suppose that $y$ is a factor of an infinite word $w$ over $\N$ such that $\sigma^i(w) \le_{\lex} a$ for all $i\ge 0$. Then $\Suff_i(y)\le_{\lex} \Pref_i(a)$ for all $i\ge 0$. By Lemma~\ref{Lem : RepUEqualSuff}, we get $y\in\NU$.

To end the proof, we note that $A_U=\{0,\ldots,\floor{\beta}\}$ if $a=d_{\beta}(1)$ and $A_U=\{0,\ldots,\ceil{\beta}-1\}$ if $a=d_\beta^*(1)$. Moreover, since $\rep_U(U(i)-1)=a_1\cdots a_i$ for all $i\ge 0$, we get that the recurrence relation \eqref{Eq : NewBertrand} holds for all $i\ge 0$. The computation of the limit from~\eqref{Eq : Limit} then follows from Theorem~\ref{Thm : Renewal}, which in turn implies that $\lim_{i\to \infty}\frac{U(i+1)}{U(i)}=\beta$.
\end{proof}

Note that in the previous statement, the second item
coincides with the condition given in the original theorem of Bertrand-Mathis~\cite{Bertrand-Mathis1989}. The main difference between these two results is that there exist two Bertrand numeration systems associated with a \emph{simple Parry number} $\beta>1$, i.e., such that $d_\beta(1)$ ends with infinitely many zeroes. 
To distinguish them, we call \emph{canonical} the Bertrand numeration system defined by~\eqref{Eq : NewBertrand} when $a=d_\beta^*(1)$, and \emph{non-canonical} that for which $a=d_\beta(1)$.
For instance, the canonical Bertrand numeration system associated with the golden ratio $(1+\sqrt{5})/2$ is the well-known Zeckendorf numeration system $U=(1,2,3,5,8,\ldots)$ defined by $(U(0),U(1))=(1,2)$ and $U(i)=U(i-1)+U(i-2)$ for all $i\ge2$~\cite{Zeckendorf1972}. The associated non-canonical Bertrand numeration system is the numeration system $U=(1,2,4,7,12,\ldots)$ from Example~\ref{Ex : CounterExampleBertrand} defined by $(U(0),U(1))=(1,2)$ and $U(i)=U(i-1)+U(i-2)+1$ for all $i\ge2$. See Figure~\ref{Fig : NC-Zeckendorf} for automata recognizing the corresponding numeration languages. In Figures~\ref{Fig : SubCanonical3} and \ref{Fig : SubNonCanonical3}, we see the canonical and non-canonical Bertrand numeration systems associated with the integer base $3$.

\section{Linear Bertrand numeration systems} 
\label{Sec : Linear}

In the following proposition, we study the linear recurrence relations satisfied by Bertrand numeration systems associated with a \emph{Parry number} $\beta$, i.e., a real number $\beta>1$ such that $d_\beta(1)$ is ultimately periodic. As is usual, if an expansion ends with a tail of zeroes, we often omit to write it down.




\begin{proposition}
\label{Pro : BertrandParry}
Let $U$ be a Bertrand numeration system.
\begin{enumerate}
\item If $\NU=\Fac(S_\beta)$ where $\beta>1$ is such that $d_\beta^*(1)=d_1\cdots d_m(d_{m+1}\cdots d_{m+n})^\omega$ with $m\ge 0$ and $n\ge 1$, then $U$ satisfies the linear recurrence relation of characteristic polynomial 
$(X^{m+n}-\sum_{j=1}^{m+n}d_jX^{m+n-j})
	-(X^m-\sum_{j=1}^m d_jX^{m-j})$.
\item If $\NU=\Fac(S'_\beta)$ where $\beta>1$ is such that $d_\beta(1)=t_1\cdots t_n$ with $n\ge 1$ and $t_n\ge 1$, then $U$ satisfies the linear recurrence relation of characteristic polynomial 
$(X^{n+1}-\sum_{j=1}^n t_jX^{n+1-j})
	-(X^n-\sum_{j=1}^n t_jX^{n-j})$.
\end{enumerate}
\end{proposition}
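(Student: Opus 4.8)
The plan is to obtain both linear recurrences from the inhomogeneous relation of Theorem~\ref{Thm : NewBertrand} by a generating function computation. By~\eqref{Eq : NewBertrand}, in either case $U$ satisfies $U(i)=\sum_{j=1}^{i} a_j U(i-j)+1$ for all $i\ge 0$, where $(a_j)_{j\ge 1}$ equals $d_\beta^*(1)$ in item~1 and $d_\beta(1)$ in item~2. Introducing the formal power series $f(X)=\sum_{i\ge 0}U(i)X^i$ and $D(X)=\sum_{j\ge 1}a_jX^j$, this recurrence is equivalent to $f(X)=D(X)f(X)+\frac{1}{1-X}$, whence
\[
	f(X)=\frac{1}{(1-X)\bigl(1-D(X)\bigr)}.
\]
All manipulations are legitimate in $\Z[[X]]$ because $1-D(X)$ has constant term $1$. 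The key observation is that $U$ satisfies the linear recurrence of characteristic polynomial $P(X)$ as soon as $\widetilde P(X)\,f(X)$ is a polynomial of degree strictly below $\deg P$, where $\widetilde P(X)=X^{\deg P}P(1/X)$ denotes the reciprocal of $P$; indeed, the vanishing of the coefficient of $X^i$ in $\widetilde P(X)f(X)$ is exactly the desired recurrence. It therefore suffices to simplify the denominator of $f$ and match it with $\widetilde P$.

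For item~1 I would expand the eventually periodic digit sequence. Since $d_{j+n}=d_j$ for $j>m$, summing the periodic tail as a geometric series gives
\[
	D(X)=\sum_{j=1}^{m}d_jX^j+\frac{\sum_{j=m+1}^{m+n}d_jX^j}{1-X^n}.
\]
Clearing the factor $1-X^n$ and expanding, a direct computation yields the polynomial identity $(1-X^n)\bigl(1-D(X)\bigr)=\widetilde P(X)$ with $\widetilde P(X)=X^{m+n}P(1/X)$. Multiplying the expression for $f$ by $1-X^n=(1-X)(1+X+\cdots+X^{n-1})$ then cancels the spurious factor $1-X$ and leaves
\[
	f(X)=\frac{1+X+\cdots+X^{n-1}}{\widetilde P(X)}.
\]
The numerator has degree $n-1<m+n$, so the coefficient of $X^i$ in $\widetilde P(X)f(X)$ vanishes for $i\ge m+n$, which is precisely the asserted recurrence. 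For item~2 the sequence $(a_j)$ is finite, so $D(X)=\sum_{j=1}^{n}t_jX^j$ is already a polynomial; here one checks directly that $(1-X)\bigl(1-D(X)\bigr)=\widetilde P(X)$ with $\widetilde P(X)=X^{n+1}P(1/X)$, so that $f(X)=1/\widetilde P(X)$ and the recurrence follows as before, the factor $1-X$ now surviving and accounting for the raised degree $n+1$.

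The heart of the argument is thus the two reciprocal-polynomial identities $(1-X^n)(1-D(X))=\widetilde P(X)$ and $(1-X)(1-D(X))=\widetilde P(X)$, each verified by a routine expansion. I expect the bookkeeping in item~1 to require the most care: correctly isolating the preperiod and period of $d_\beta^*(1)$, summing the periodic part, and tracking the cancellation of $1-X$ against $1-X^n$ so that the denominator collapses to a polynomial of degree $m+n$ with no lower-order remainders. Everything else is standard formal power series algebra, and one may sanity-check the formulas on the golden ratio, where they return $P(X)=X^2-X-1$ in the canonical case and $P(X)=X^3-2X^2+1=(X-1)(X^2-X-1)$ in the non-canonical case.
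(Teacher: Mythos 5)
Your proof is correct, but it reaches the result by a different formal route than the paper. The paper's proof is a direct two-line computation: starting from relation~\eqref{Eq : NewBertrand} of Theorem~\ref{Thm : NewBertrand}, it forms the difference $U(i)-U(i-n)$ (resp.\ $U(i)-U(i-1)$) and uses the periodicity of $d_\beta^*(1)$ beyond index $m$ (resp.\ the finiteness of $d_\beta(1)$) to cancel both the tails of the two convolution sums and the constant $+1$, which immediately yields the stated recurrence for $i\ge m+n$ (resp.\ $i\ge n+1$). Your generating-function argument encodes exactly the same cancellation --- multiplying $f$ by $1-X^n$ is the operation of subtracting $U(i-n)$, and the factor $1-X$ in the denominator is the $+1$ --- but packaging it as the identities $(1-X^n)(1-D(X))=\widetilde P(X)$ and $(1-X)(1-D(X))=\widetilde P(X)$ makes the origin of the characteristic polynomial transparent and gives, as a bonus, the closed forms $f(X)=(1+X+\cdots+X^{n-1})/\widetilde P(X)$ and $f(X)=1/\widetilde P(X)$, from which both the recurrence and the index at which it starts to hold are read off from the degree of the numerator. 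The paper's version is more elementary and self-contained; yours explains conceptually where the two correction terms $X^m-\sum_{j=1}^m d_jX^{m-j}$ and $X^n-\sum_{j=1}^n t_jX^{n-j}$ come from, and your sanity check on the golden ratio (including the factorization $X^3-2X^2+1=(X-1)(X^2-X-1)$) is consistent with the paper's examples. Both arguments rest on the same two inputs: relation~\eqref{Eq : NewBertrand} and the (eventual) periodicity of the digit string.
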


\begin{proof}
Let us prove the first item. Thus, we suppose that $\NU=\Fac(S_\beta)$ where $\beta>1$ is such that $d_\beta^*(1)=d_1\cdots d_m(d_{m+1}\cdots d_{m+n})^\omega$ with $m\ge 0$ and $n\ge 1$. By Theorem~\ref{Thm : NewBertrand}, we get that 
\begin{align*}
	U(i)-U(i-n)
	&=\sum_{j=1}^i d_jU(i-j)+1
	-\sum_{j=1}^{i-n} d_jU(i-n-j)-1\\
	&=\sum_{j=1}^{m+n} d_jU(i-j)
	-\sum_{j=1}^m d_jU(i-n-j)
\end{align*}
for all $i\ge m+n$.
We now prove the second item. Suppose that $\NU=\Fac(S'_\beta)$ where $\beta>1$ is such that $d_\beta(1)=t_1\cdots t_n$ with $n\ge 1$ and $t_n\ge 1$. By Theorem~\ref{Thm : NewBertrand}, we get that 
\begin{align*}
	U(i)-U(i-1)
	&=\sum_{j=1}^n t_jU(i-j)+1
	-\sum_{j=1}^n t_jU(i-1-j)-1\\
	&=\sum_{j=1}^n t_jU(i-j)
	-\sum_{j=1}^n t_jU(i-1-j)
\end{align*}
for all $i\ge n+1$.
\end{proof}


In the following corollary, we emphasize the simple form of the characteristic polynomial in the first item of Proposition~\ref{Pro : BertrandParry} when $\beta$ is simple Parry number: the coefficients can be obtained directly from the digits of $d_\beta(1)$.

\begin{corollary}
\label{Cor : SimpleParryPolynomial}
Let $U$ be a Bertrand numeration system such that $\NU=\Fac(S_\beta)$ where $\beta>1$ is such that $d_\beta(1)=t_1\cdots t_n$ with $n\ge 1$. Then $U$ satisfies the linear recurrence relation of characteristic polynomial 
$X^n-\sum_{j=1}^n t_jX^{n-j}$. 
\end{corollary}

\begin{proof}
Since $d_\beta^*(1)=(t_1\cdots t_{n-1}(t_n-1))^\omega$, the first item of Proposition~\ref{Pro : BertrandParry} gives us that $U$ satisfies the linear recurrence relation of characteristic polynomial $
	X^n-\sum_{j=1}^{n-1}t_jX^{n-j}-(t_n-1)-1
	=X^n-\sum_{j=1}^n t_jX^{n-j}$.
\end{proof}

\section{Lexicographically greastest words of each length}
\label{Sec : LexMax}

A key argument in the proof of Theorem~\ref{Thm : NewBertrand} was the study of the lexicographically greatest words of each length;  we see this in Lemmas~\ref{Lem : RepUEqualSuff} and~\ref{Lem : ConvergenceBertrand}. In this section, we investigate more properties of these words, which will allow us to obtain yet another characterization of Bertrand numeration systems. 

In order to study the regularity of the numeration language of positional systems having a dominant root, Hollander proved the following result.

\begin{proposition}[\cite{Hollander1998}]
\label{Pro : ConvergenceRealBases}
Let $U$ be a positional numeration system having a dominant root $\beta>1$. If $\beta$ is not a simple Parry number, then $\lim_{i\to \infty} \rep_U(U(i)-1)=d_\beta(1)$. Otherwise, $d_\beta(1)=t_1\cdots t_n$ with $t_n\ne 0$ and for all $k\ge 0$, define
\[
	w_k=(t_1\cdots t_{n-1} (t_n-1))^k t_1\cdots t_n.
\]
Then for all $\ell\ge 0$, there exists $I\ge 0$ such that for all $i\ge I$, there exists $k\ge 0$ such that $\Pref_\ell(\rep_U(U(i)-1))=\Pref_\ell(w_k0^\omega)$.
\end{proposition}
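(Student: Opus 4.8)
The plan is to run the greedy algorithm computing $\rep_U(U(i)-1)$ and to compare it, one position at a time, with the transformation $r_{k-1}(1)\mapsto r_k(1)=\beta\,r_{k-1}(1)-\floor{\beta\,r_{k-1}(1)}$ that generates $d_\beta(1)=\varepsilon_1(1)\varepsilon_2(1)\cdots$ (with $r_0(1)=1$), in the notation of the introduction. Write $\rep_U(U(i)-1)=a^{(i)}_1\cdots a^{(i)}_i$ and put $R^{(i)}_k=U(i)-1-\sum_{j=1}^k a^{(i)}_j U(i-j)$, so that $a^{(i)}_k=\floor{R^{(i)}_{k-1}/U(i-k)}$ and $0\le R^{(i)}_k<U(i-k)$. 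The engine of the argument is the claim that, for every fixed $k$, $\lim_{i\to\infty} R^{(i)}_k/U(i-k)=r_k(1)$. I would prove this by induction on $k$. The base case is $R^{(i)}_0/U(i)=(U(i)-1)/U(i)\to 1=r_0(1)$. For the step I use the dominant root: since $U(i-s)/U(i-s-1)\to\beta$ for each fixed $s$, a telescoping product yields $U(i-j)/U(i-k)\to\beta^{k-j}$ and $1/U(i-k)\to 0$. Rewriting
\[
	\frac{R^{(i)}_k}{U(i-k)}=\frac{R^{(i)}_{k-1}}{U(i-k+1)}\cdot\frac{U(i-k+1)}{U(i-k)}-a^{(i)}_k,
\]
the first term tends to $\beta\,r_{k-1}(1)$; provided $\beta\,r_{k-1}(1)\notin\Z$, the floor defining $a^{(i)}_k$ is continuous at the limit, so $a^{(i)}_k\to\floor{\beta\,r_{k-1}(1)}=\varepsilon_k(1)$ and the whole expression tends to $r_k(1)$. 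The only obstruction is thus the boundary condition $\beta\,r_{k-1}(1)\in\Z$, which is exactly $r_k(1)=0$.

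Case~1 then follows directly. If $\beta$ is not a simple Parry number, the orbit of $1$ under the $\beta$-transformation never reaches $0$, so $r_k(1)>0$ and $\beta\,r_k(1)\notin\Z$ for all $k$. Hence each digit stabilizes, $a^{(i)}_\ell\to\varepsilon_\ell(1)$ as $i\to\infty$ for every fixed $\ell$, which is precisely $\lim_{i\to\infty}\rep_U(U(i)-1)=d_\beta(1)$ in the prefix distance (and here $d_\beta(1)=d^*_\beta(1)$).

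For Case~2, write $d_\beta(1)=t_1\cdots t_n$ with $t_n\ge1$, so the only boundary is at positions that are multiples of $n$, where $\beta\,r_{n-1}(1)=t_n\in\Z$. I would argue block by block, by induction on the block index, that at the start of each block the normalized remainder tends to $1$, until the first "terminating" block occurs. From a block whose starting remainder tends to $1$, positions $1,\dots,n-1$ of the block reproduce $t_1\cdots t_{n-1}$ (the relevant floors are continuous since $r_j(1)>0$ and $\beta\,r_j(1)\notin\Z$ for $0\le j\le n-2$), while the $n$-th digit equals $t_n-1$ or $t_n$ according to whether the corresponding normalized remainder approaches its limit $t_n$ from below or from above. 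In the first case (digit $t_n-1$, which is exactly the period letter of $d^*_\beta(1)$) the next block again starts with normalized remainder tending to $1$, so the analysis recurses; in the second case (digit $t_n$) the normalized remainder tends to $0$, and since $R^{(i)}_{Kn}/U(i-Kn)\to0$ forces $\floor{R^{(i)}_{Kn}/U(i-Kn-s)}\to 0$ for each fixed $s$, all subsequent digits up to position $\ell$ vanish. Thus the length-$\ell$ prefix of $\rep_U(U(i)-1)$ consists of $k$ full periods $t_1\cdots t_{n-1}(t_n-1)$ followed by $t_1\cdots t_n$ and then zeros, i.e.\ it equals $\Pref_\ell(w_k0^\omega)$, where $k$ is the number of continuing blocks before termination (and $k$ is taken large if no terminating block occurs within the first $\ell$ positions).

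The main obstacle is precisely this simple Parry case. The delicate points are that one must (i) justify the dichotomy at each boundary and propagate it cleanly, which requires controlling the accumulated error of the normalized remainders over the $\ceil{\ell/n}$ relevant blocks rather than over a single one, and (ii) secure the uniformity needed to extract $I=I(\ell)$: all the limits above are asymptotic in $i$ with $k$ fixed, so for a given $\ell$ one only tracks finitely many positions and at most $\ceil{\ell/n}$ block decisions, and $I$ is chosen as the maximum of the finitely many thresholds beyond which each of these digits and each "from below / from above" sign is already determined. With this choice, every $i\ge I$ yields a length-$\ell$ prefix of the prescribed form, completing the proof.
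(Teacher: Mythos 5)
The paper does not actually prove this proposition: it is imported from Hollander's work via the citation \cite{Hollander1998} and used as a black box, so there is no in-paper proof to compare your argument against. Taken on its own terms, your proof is sound and follows the standard route: normalize the greedy remainders by $U(i-k)$, show by induction on the fixed position $k$ that $R^{(i)}_k/U(i-k)\to r_k(1)$ using $U(i-k+1)/U(i-k)\to\beta$, and note that each digit, being a floor, stabilizes to $\varepsilon_k(1)$ whenever $\beta r_{k-1}(1)\notin\Z$; the simple Parry case is then exactly the case where a boundary is hit, and your block-by-block subsequence analysis at the positions that are multiples of $n$ gives the $w_k$ dichotomy. Two points deserve tightening. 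First, the ``engine'' claim as literally stated (an unconditional limit $R^{(i)}_k/U(i-k)\to r_k(1)$ for every fixed $k$) fails for $k\ge n$ when $\beta$ is a simple Parry number, since the limit need not exist; it should be asserted only up to the first boundary and thereafter relativized to the subsequences you introduce --- your later treatment does this, so the flaw is only in the phrasing. Second, in the terminating branch the digit at position $Kn+s+1$ is $\floor{R^{(i)}_{Kn+s}/U(i-Kn-s-1)}$ rather than $\floor{R^{(i)}_{Kn}/U(i-Kn-s-1)}$, so you need the (easy) monotonicity $R^{(i)}_{Kn+s}\le R^{(i)}_{Kn}$ to conclude that the subsequent digits vanish. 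With these repairs, extracting a single threshold $I(\ell)$ from the finitely many positional limits and the at most $\ceil{\ell/n}$ boundary decisions works exactly as you describe.
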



\begin{example}\label{Ex : conv}\
\begin{itemize}
\item  For the integer base-$b$ numeration system $U=(b^i)_{i\ge 0}$, 
we have $w_k=(b-1)^kb$ for all $k\ge 0$ and $\rep_U(b^i-1)=(b-1)^i$ for all $i\ge 0$. This agrees with Proposition~\ref{Pro : ConvergenceRealBases}.
\item For 
the Zeckendorf numeration system,
it can be easily seen that 
\[
	\rep_U(U(i)-1)=
	\begin{cases}
	(10)^{\frac{i}{2}},	& \text{if } i \text{ is even};\\
	(10)^{\frac{i-1}{2}}1,	& \text{if } i \text{ is odd}.
	\end{cases}
\] 
We have $w_k=(10)^k 11$ for all $k\ge 0$. Therefore, for all $\ell\ge 0$ and all $i\ge \ell$, the words $\rep_U(U(i)-1)$ and $w_{\floor{\ell/2}}$ share the same prefix of length $\ell$.
\item Let $U$ be the numeration system defined by $(U(0),U(1))=(1,3)$ and for $i\ge 2$, $U(i)=3U(i-1)-U(i-2)$. Then $U$ has the dominant root $\varphi^2$ and $\rep_U(U(i)-1)=21^{i-1}$ for all $i\ge 0$. This agrees with Proposition~\ref{Pro : ConvergenceRealBases} since $d_{\varphi^2}(1)=21^\omega$. 
\end{itemize}
\end{example}

As illustrated in the next example, when $\beta$ is a simple Parry number, Proposition~\ref{Pro : ConvergenceRealBases} does not necessarily give a convergence of the sequence $(\rep_U(U(i)-1))_{i\ge 0}$. 


\begin{example}
\label{Ex : NonConvergenceRealBase}
Consider the numeration system $U=(U(i))_{i\ge 0}$ defined by $(U(0),\linebreak U(1), U(2), U(3))=(1,2,3,5)$ and for all $i\ge 4$,	$U(i) = U(i-1) + U(i-3) + U(i-4) + 1$.
It has the golden ratio as dominant root.
Hence, as for the Zeckendorf numeration system, we have $w_k=(10)^k 11$ for all $k\ge 0$.
For all $i\ge 4$, we can compute
\[
	\rep_U(U(i)-1)
	=\begin{cases}
	110^{i-2}, & \text{if } i \equiv 0,1 \bmod{4}; \\
	10110^{i-4}, & \text{if } i \equiv 2,3 \bmod{4}.
	\end{cases}
\]
Therefore, for all $i\ge 4$, $\rep_U(U(i)-1)=\Pref_i(w_0 0^\omega)$ if $i$ is congruent to $0$ or $1$ modulo $4$, and $\rep_U(U(i)-1)=\Pref_i(w_1 0^\omega)$ otherwise. Thus, the limit $\lim_{i\to \infty}\rep_U(U(i)-1)$ does not exist.
\end{example}

In Examples~\ref{Ex : conv}  and~\ref{Ex : NonConvergenceRealBase}, we illustrated that the sequence $(\rep_U(U(i)-1))_{i\ge 0}$ may or may not converge. In the first, we gave examples such that its limit is either $d_\beta(1)$ or $d^*_\beta(1)$.
In the second, we illustrated that even if the recurrence relation satisfied by $U$ gives the intuition that the sequence would converge to $w_10^\omega$, it is not the case. In fact, seeing Proposition~\ref{Pro : ConvergenceRealBases}, one might think that we can provide a positional numeration system $U$ such that $\lim_{i\to \infty} \rep_U(U(i)-1)=w_k 0^\omega$ with $k\ge 1$. We show that this cannot happen, which can be thought as a refinement of Proposition~\ref{Pro : ConvergenceRealBases}.

\begin{proposition}
\label{Pro : ConvergenceWjNotPossible}
Let $U$ be a positional numeration system with a dominant root $\beta>1$. If the limit $\lim_{i\to \infty}\rep_U(U(i)-1)$ exists, then it equals either $d_\beta^*(1)$ or $d_\beta(1)$.
\end{proposition}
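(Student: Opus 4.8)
The plan is to combine Hollander's Proposition~\ref{Pro : ConvergenceRealBases} with a shift-admissibility property of the limit coming from the suffix-closedness of $\NU$. Write $L=\lim_{i\to\infty}\rep_U(U(i)-1)$. If $\beta$ is not a simple Parry number, then Proposition~\ref{Pro : ConvergenceRealBases} gives $L=d_\beta(1)$ directly, which is one of the two permitted values, so nothing is left to prove. Hence all the work is in the simple Parry case, where $d_\beta(1)=t_1\cdots t_n$ with $t_n\ge 1$, $d_\beta^*(1)=(t_1\cdots t_{n-1}(t_n-1))^\omega$, and $w_k=(t_1\cdots t_{n-1}(t_n-1))^k\,t_1\cdots t_n$ as in that proposition; I record that $w_0 0^\omega=d_\beta(1)$.

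First I would establish that $\sigma^j(L)\le_\lex L$ for every $j\ge 1$. Convergence means that for each fixed length the prefixes of $\rep_U(U(i)-1)$ eventually stabilize to the corresponding prefix of $L$. So for fixed $j,p$ and all large $i$, the word $\Suff_{i-j}(\rep_U(U(i)-1))$ begins with $L_{j+1}\cdots L_{j+p}$, while $\rep_U(U(i-j)-1)$ begins with $L_1\cdots L_p$. Since $\NU$ is suffix-closed (as recalled in the proof of Lemma~\ref{Lem : FacBertrand}), that suffix lies in $\NU$, and by Lemma~\ref{Lem : RepUEqualSuff} every length-$(i-j)$ word of $\NU$ is $\le_\lex\rep_U(U(i-j)-1)$. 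Comparing the length-$p$ prefixes of these two words and letting $i\to\infty$ yields $\Pref_p(\sigma^j(L))\le_\lex\Pref_p(L)$ for all $p$, that is, $\sigma^j(L)\le_\lex L$.

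Next I would pin down $L$ as a single explicit word. Suppose $L\ne d_\beta^*(1)$, and let $q$ be the first position at which $L$ and $d_\beta^*(1)=(t_1\cdots t_{n-1}(t_n-1))^\omega$ differ. For each $\ell\ge q$, Proposition~\ref{Pro : ConvergenceRealBases} furnishes some $k(\ell)$ with $\Pref_\ell(L)=\Pref_\ell(w_{k(\ell)}0^\omega)$. The first position at which $w_k 0^\omega$ departs from $d_\beta^*(1)$ is exactly $(k+1)n$ (digit $t_n$ against $t_n-1$), so matching first-deviation positions forces $(k(\ell)+1)n=q$ for every $\ell\ge q$. Thus $q$ is a multiple of $n$, the index $k(\ell)$ is the constant $k_0:=q/n-1$, and therefore $L=w_{k_0}0^\omega$. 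Finally I would invoke the admissibility step to exclude $k_0\ge 1$: for such $k_0$ one has $\sigma^n(L)=w_{k_0-1}0^\omega$, which agrees with $L=w_{k_0}0^\omega$ up to position $k_0 n-1$ and then carries $t_n$ where $L$ carries $t_n-1$, so $\sigma^n(L)>_\lex L$, contradicting $\sigma^n(L)\le_\lex L$. Hence $k_0=0$ and $L=w_0 0^\omega=d_\beta(1)$, which closes the argument.

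I expect the main obstacle to be the middle step: extracting from Proposition~\ref{Pro : ConvergenceRealBases}, whose matching index $k$ is allowed to depend on the prefix length $\ell$, the conclusion that $L$ is globally a single word $w_{k_0}0^\omega$ (or $d_\beta^*(1)$). The clean way around this is the first-deviation bookkeeping above, which converts the $\ell$-dependent data into the rigid constraint $(k(\ell)+1)n=q$; once that is in hand, the shift inequality produced by the suffix-closedness argument disposes of the cases $k_0\ge 1$ in a single lexicographic comparison.
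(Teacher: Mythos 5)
Your proof is correct, and it reaches the key contradiction by a genuinely different route than the paper. Both arguments reduce immediately to the simple Parry case via Proposition~\ref{Pro : ConvergenceRealBases} and must then rule out the candidate limits $w_k0^\omega$ with $k\ge 1$. The paper does this arithmetically: since $t_1\cdots t_{n-1}(t_n-1)$ is eventually a prefix of $\rep_U(U(i)-1)$, the greedy algorithm forces $\sum_{j=1}^n t_jU(i-j)>U(i)-1$, while the occurrence of $t_1\cdots t_n$ as a factor at position $kn+1$ of $\rep_U(U(i+kn)-1)$ forces $U(i)>\sum_{j=1}^n t_jU(i-j)$, a contradiction. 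You instead work entirely at the level of the limit word $L$: you first derive the shift-admissibility $\sigma^j(L)\le_{\lex} L$ from suffix-closedness of $\NU$ combined with Lemma~\ref{Lem : RepUEqualSuff} (in effect redoing the last part of Lemma~\ref{Lem : ConvergenceBertrand} without assuming $U$ is Bertrand), and then note that $\sigma^n(w_{k_0}0^\omega)=w_{k_0-1}0^\omega>_{\lex}w_{k_0}0^\omega$ when $k_0\ge 1$. The two contradictions are cousins --- both ultimately exploit that a suffix of a greedy representation lies again in $\NU$ and is dominated by the lexicographically greatest word of its length --- but yours is purely lexicographic where the paper's is numerical. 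A further point in your favour is that you make explicit the ``first-deviation'' bookkeeping showing that an existing limit must equal either $d_\beta^*(1)$ or a single fixed $w_{k_0}0^\omega$; the paper takes this identification for granted when it opens with ``suppose that there exists $k\ge 1$ such that $\lim_{i\to\infty}\rep_U(U(i)-1)=w_k0^\omega$''.
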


\begin{proof}
If $d_\beta(1)$ is infinite, then the result follows from Proposition~\ref{Pro : ConvergenceRealBases}. Let us consider the case where $d_\beta(1)=t_1\cdots t_n$ with $t_n\ne 0$. Proceed by contradiction and suppose that there exists $k\ge1$ such that $\lim_{i\to \infty}\rep_U(U(i)-1)=w_k0^\omega$. For all $i$ large enough, $t_1\cdots t_{n-1} (t_n-1)$ is a prefix of $\rep_U(U(i)-1)$, hence the greedy algorithm implies that $\sum_{j=1}^n t_j U(i-j)>U(i)-1$. On the other hand, for all $i$ large enough, $t_1\cdots t_n$ is a factor occurring at position $kn+1$ in $\rep_U(U(i+kn)-1)$, hence, again from the greedy algorithm, we get $U(i)> \sum_{j=1}^n t_j U(i-j)$. By putting the inequalities altogether, we obtain a contradiction.
\end{proof}

Thanks to this result, we obtain another characterization of Bertrand numeration systems.

\begin{theorem}
\label{Thm : ConvergenceBertrand}
A positional numeration system $U$ is Bertrand if and only if one of the following conditions is satisfied.
\begin{enumerate}
\item We have $\rep_U(U(i)-1)=\Pref_i(10^\omega)$ for all $i\ge 0$.
\item There exists a real number $\beta> 1$ such that  $\rep_U(U(i)-1)=\Pref_i(d^*_\beta(1))$ for all $i\ge 0$.
\item There exists a real number $\beta> 1$ such that  $\rep_U(U(i)-1)=\Pref_i(d_\beta(1))$ for all $i\ge 0$.
\end{enumerate} 
\end{theorem}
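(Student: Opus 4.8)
The plan is to deduce this characterization from the results already at hand: Lemma~\ref{Lem : ConvergenceBertrand} will carry the backward direction, while the forward direction will rest on Theorem~\ref{Thm : NewBertrand} together with Proposition~\ref{Pro : ConvergenceWjNotPossible}. The guiding observation is that each of the three displayed conditions merely asserts the existence of one specific infinite word $a$ (namely $10^\omega$, $d^*_\beta(1)$, or $d_\beta(1)$) with $\rep_U(U(i)-1)=\Pref_i(a)$ for all $i\ge 0$; equivalently, that the sequence $\bigl(\rep_U(U(i)-1)\bigr)_{i\ge 0}$ converges to $a$ in the prefix topology. Thus the whole statement is a matter of matching this convergence condition against the possible limits.

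For the backward direction I would argue uniformly across the three cases. In each one, the hypothesis furnishes an infinite word $a$ with $\rep_U(U(i)-1)=\Pref_i(a)$ for all $i$. Since every $\rep_U(U(i)-1)$ lies in $A_U^*$, all prefixes of $a$ are words over $A_U$, whence $a$ itself is an infinite word over $A_U$. Lemma~\ref{Lem : ConvergenceBertrand} then immediately yields that $U$ is Bertrand, so this direction costs essentially nothing beyond recognizing the shape of the hypotheses.

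For the forward direction, suppose $U$ is Bertrand. Lemma~\ref{Lem : ConvergenceBertrand} produces an infinite word $a$ over $A_U$ with $\rep_U(U(i)-1)=\Pref_i(a)$ for all $i$, so $\lim_{i\to\infty}\rep_U(U(i)-1)=a$ exists. I then split according to Theorem~\ref{Thm : NewBertrand}. If $U(i)=i+1$ for all $i$, then the numeration language is $0^*10^*$, so $\rep_U(U(i)-1)=\rep_U(i)=\Pref_i(10^\omega)$, which is exactly condition~1. Otherwise, Theorem~\ref{Thm : NewBertrand} guarantees that $U$ has a dominant root $\beta>1$; since the limit of $\rep_U(U(i)-1)$ exists, Proposition~\ref{Pro : ConvergenceWjNotPossible} forces $a$ to equal either $d^*_\beta(1)$ or $d_\beta(1)$, giving condition~2 or condition~3 respectively.

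The argument is mostly bookkeeping, and the only delicate points are structural rather than computational. First, the trivial system $U(i)=i+1$ must be isolated, because its growth ratio tends to $1$ and so it lies outside the dominant-root hypothesis of Proposition~\ref{Pro : ConvergenceWjNotPossible}; correspondingly, $10^\omega$ is not of the form $d_\beta(1)$ or $d^*_\beta(1)$ for any $\beta>1$, which is why it appears as a separate case. Second, one must ensure that the dominant root required to invoke Proposition~\ref{Pro : ConvergenceWjNotPossible} is genuinely available in the non-trivial cases, and this is precisely what item~d of Theorem~\ref{Thm : NewBertrand} supplies. With these two observations in place the equivalence follows directly, and I expect no genuinely hard step.
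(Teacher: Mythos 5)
Your proposal is correct and follows essentially the same route as the paper: sufficiency of all three conditions via Lemma~\ref{Lem : ConvergenceBertrand}, and necessity by combining Lemma~\ref{Lem : ConvergenceBertrand} (which gives existence of the limit word $a$) with the case split of Theorem~\ref{Thm : NewBertrand} and Proposition~\ref{Pro : ConvergenceWjNotPossible}. The extra remarks you make (why the $U(i)=i+1$ case must be isolated, and where the dominant root comes from) are accurate but only elaborate on what the paper's proof leaves implicit.
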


\begin{proof}
All three conditions are sufficient by Lemma~\ref{Lem : ConvergenceBertrand}. Conversely, suppose that $U$ is a Bertrand numeration system. In Case 1 of Theorem~\ref{Thm : NewBertrand}, we have $\rep_U(U(i)-1)=\rep_U(i)=10^{i-1}$ for all $i\ge 1$. Otherwise, $U$ has a dominant root $\beta>1$ by Theorem~\ref{Thm : NewBertrand}. The result then follows from Lemma~\ref{Lem : ConvergenceBertrand} combined with Proposition~\ref{Pro : ConvergenceWjNotPossible}. 
\end{proof}

We note that the three cases of Theorem~\ref{Thm : ConvergenceBertrand} indeed match those described in Theorem~\ref{Thm : NewBertrand}.

\section{The non-canonical $\beta$-shift}
\label{Sec : NonCanonicalShift}

In view of their definitions, the sets $S_\beta$ and $S_\beta'$ are both \emph{subshifts} of $A^{\N}$, i.e., they are shift-invariant and closed w.r.t the topology induced by the prefix distance. These subshifts coincide unless $\beta$ is a simple Parry number. Therefore, in the specific case where $\beta$ is a simple Parry number, by analogy to the name \emph{$\beta$-shift} commonly used for $S_\beta$, we call the set $S_\beta'$ the \emph{non-canonical $\beta$-shift}. 
In this section, we see whether or not the classical properties of $S_\beta$ still hold for $S'_\beta$.

The following proposition is the analogue of~\cite[Theorem 7.2.13]{Lothaire2002} that characterizes sofic (canonical) $\beta$-shifts, i.e., such that $\Fac(S_\beta)$ is accepted by a finite automaton.

%

\begin{proposition}
\label{prop:S'-sofic}
A real number $\beta>1$ is a Parry number if and only if the subshift $S_\beta'$ is sofic.
\end{proposition}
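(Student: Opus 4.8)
The plan is to dispatch every case but one by the known characterization of sofic canonical $\beta$-shifts, namely \cite[Theorem 7.2.13]{Lothaire2002}, which asserts that $S_\beta$ is sofic if and only if $\beta$ is a Parry number, together with the already recorded fact that $S_\beta'=S_\beta$ unless $\beta$ is a simple Parry number.

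First I would isolate the genuine content. If $\beta$ is not a simple Parry number, then $S_\beta'=S_\beta$, so ``$S_\beta'$ sofic'' is equivalent to ``$S_\beta$ sofic'', which by the cited theorem is equivalent to ``$\beta$ Parry''; this settles both implications for such $\beta$. If instead $\beta$ is a simple Parry number, then $\beta$ is a fortiori a Parry number, so the backward implication ``$S_\beta'$ sofic $\Rightarrow\beta$ Parry'' has an automatically true conclusion and needs no argument. Hence the only statement left to establish is the forward implication in the simple Parry case: \emph{if $\beta$ is a simple Parry number, then $S_\beta'$ is sofic}.

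So fix a simple Parry number $\beta$, write $d_\beta(1)=t_1\cdots t_n0^\omega$ with $t_n\ge1$, put $A=\{0,\ldots,\floor{\beta}\}$, and denote by $d_i$ the $i$-th letter of $d_\beta(1)$, so that $d_i=t_i$ for $i\le n$ and $d_i=0$ for $i>n$. By definition of $S_\beta'$ (and exactly as in the proof of Theorem~\ref{Thm : NewBertrand} via Lemma~\ref{Lem : RepUEqualSuff}), $\Fac(S_\beta')$ is the set of words $w\in A^*$ all of whose suffixes satisfy $\Suff_i(w)\le_{\lex}\Pref_i(d_\beta(1))$. I would recognize this language by the deterministic automaton that reads $w$ left to right and keeps as its state the length $k$ of the longest suffix of the prefix read so far that equals a prefix of $d_\beta(1)$ (the ``tied'' suffix), augmented by a dead state. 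From state $k$ on input $c$: if $c=d_{k+1}$ the tied suffix lengthens to $k+1$; if $c>d_{k+1}$ a violation occurs and we go to the dead state; if $c<d_{k+1}$ the tied suffix becomes strictly below its bound and we pass to the length of the new longest tied suffix. The initial state is $0$ and every non-dead state is accepting.

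The crux is to check that this automaton is correct and finite. Finiteness is easy: since $d_\beta(1)$ ends in $0^\omega$, every tied suffix of length $\ge n$ has the form $t_1\cdots t_n0^j$, and from such a state only the letter $0$ avoids the dead state (any positive letter exceeds the required $0$); all these states therefore behave identically and collapse to a single state with a loop on $0$, leaving only the states $0,1,\ldots,n-1$, that collapsed state, and the dead state. Correctness hinges on the less obvious point that monitoring the single longest tied suffix suffices: if the incoming letter $c$ does not violate the longest tied suffix, i.e.\ $c\le d_{k+1}$, then it violates no shorter tied suffix either. This is where I would invoke that $d_\beta(1)$ is strictly shift-maximal, $\sigma^i(d_\beta(1))<_{\lex}d_\beta(1)$ for all $i\ge1$ (a standard property of greedy expansions, also recovered here by Lemma~\ref{Lem : Bertrand}): a tied suffix of length $k'<k$ forces a border of $d_\beta(1)$, and comparing the corresponding shift of $d_\beta(1)$ with $d_\beta(1)$ yields $d_{k'+1}\ge d_{k+1}$, whence $c\le d_{k+1}\le d_{k'+1}$. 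This correctness-and-finiteness verification is the main obstacle; once in place it shows that $\Fac(S_\beta')$ is accepted by a finite automaton, so $S_\beta'$ is sofic, which completes the simple Parry case and hence the proposition.
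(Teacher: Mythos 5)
Your proof is correct and follows essentially the same route as the paper: reduce to the known soficity criterion for $S_\beta$ when $\beta$ is not a simple Parry number, and exhibit a finite automaton accepting $\Fac(S_\beta')$ when it is. After merging the Nerode-equivalent states of index $\ge n$, your longest-tied-suffix automaton is exactly the paper's modification of the classical automaton (a new state $q'$ reached from $q_n$ by $t_n$, carrying a loop labelled $0$); you simply supply the correctness and finiteness details that the paper's sketch leaves implicit.
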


\begin{proof}[Sketch]
If $\beta$ is not a simple Parry number, then $S_\beta=S_\beta'$ and the conclusion follows by~\cite[Theorem 7.2.13]{Lothaire2002}.
Suppose that $\beta$ is a simple Parry number for which $d_\beta(1)=t_1 \cdots t_n$ with $n\ge 1$ and $t_n\ne 0$. We get $d^*_\beta(1)=(t_1 \cdots t_{n-1}(t_n-1))^\omega$. An automaton recognizing $\Fac(S_\beta')$ can be constructed as a slight modification of the classical automaton recognizing $\Fac(S_\beta)$ given in~\cite[Theorem 7.2.13]{Lothaire2002}: we add a new final state $q'$, an edge from the state usually denoted $q_n$ (that is, the state reached while reading $t_1\cdots t_{n-1}$) to the new state $q'$ of label $t_n$ and a loop of label $0$ on the state $q'$.
\end{proof}

\begin{example}
The automata depicted in Figures~\ref{Fig : SubCanonical3} and~\ref{Fig : SubNonCanonical3}  accept $\Fac(S_3)$ and $\Fac(S'_3)$, and those of Figures~\ref{Fig : SubCanonicalPhi} and~\ref{Fig : SubNonCanonicalPhi} accept $\Fac(S_\varphi)$ and $\Fac(S'_\varphi)$.
\end{example}

A subshift $S\subseteq A^{\N}$ is said to be of \emph{finite type} if there exists a finite set of \emph{forbidden factors} defining words in $S$, i.e., if there exists a finite set $X\subset A^*$ such that $S=\{w\in A^{\N} : \Fac(w)\cap X=\emptyset\}$. It is said to be \emph{coded} if there exists a prefix code $Y\subset A^*$ such that $\Fac(S)=\Fac(Y^*)$.
It is well known that the $\beta$-shift $S_\beta$ is coded~\cite[Proposition 7.2.11]{Lothaire2002} for any $\beta>1$ and is of finite type whenever $\beta$ is a simple Parry number~\cite[Theorem 7.2.15]{Lothaire2002}. However, neither of these two properties is valid for the non-canonical $\beta$-shift $S_\beta'$ as shown by the following example.

\begin{example}
The non-canonical $\varphi$-shift $S'_\varphi$ is not of finite type. The minimal set of forbidden factors is given by the language $110^*1$ (this can be seen in Figure~\ref{Fig : SubNonCanonicalPhi}). Moreover, if $S'_\varphi$ were coded, then there would exist a prefix code $Y$ such that $\Fac(S'_\varphi)=\Fac(Y^*)$.
Since $11\in \Fac(S'_\varphi)$, we would have $x11y \in Y^*$ for some binary words $x,y$.
This would imply that $x11yx11y \in Y^*$, giving in turn $11yx11 \in \Fac(S'_\varphi)$, which is impossible.
\end{example}

%

The \emph{entropy} of a subshift $S$ of $A^\N$ can be  defined as the limit of the sequence $\frac{1}{i} \log (\mathrm{Card}(\Fac(S)\cap A^i))$ as 
$i$ tends to infinity. We refer the reader to \cite[Theorem 7.13]{Walters1982} or~\cite{Lothaire2002}. It is well known that the $\beta$-shift $S_\beta$ has entropy $\log(\beta)$. The following proposition shows that the same property holds for $S'_\beta$.

\begin{proposition}
\label{pro : SimpleParryDominantRoot}
For all real number $\beta>1$, the subshift $S'_\beta$ has entropy $\log(\beta)$.
\end{proposition}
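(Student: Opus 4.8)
The plan is to reduce the entropy computation to the growth rate of the associated Bertrand numeration system and then to exploit its dominant root. First I would dispose of the case where $\beta$ is not a simple Parry number: there $S'_\beta=S_\beta$, so $\Fac(S'_\beta)=\Fac(S_\beta)$ and the claim is the classical fact that $S_\beta$ has entropy $\log(\beta)$. It then remains to treat a simple Parry number $\beta$. By Theorem~\ref{Thm : NewBertrand}, the set $\Fac(S'_\beta)$ is exactly the numeration language $\NU$ of the non-canonical Bertrand numeration system $U$ associated with $\beta$, and by Theorem~\ref{Thm : NewBertrand}(d) this system has $\beta$ as dominant root. The entropy of $S'_\beta$ is therefore $\lim_{i\to\infty}\frac{1}{i}\log\big(\mathrm{Card}(\NU\cap A_U^i)\big)$, and the heart of the argument is to identify this cardinality.

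The key step is the count $\mathrm{Card}(\NU\cap A_U^i)=U(i)$ for every $i\ge 0$. Indeed, padding with leading zeroes sets up a bijection between the length-$i$ words of $\NU$ and the integers $0,1,\dots,U(i)-1$: each such integer has a unique greedy $U$-representation of length at most $i$, which extends to a length-$i$ word of $\NU=0^*\rep_U(\N)$, and conversely every word of $\NU$ of length $i$ is the zero-padded representation of some integer strictly below $U(i)$ (this is exactly the greedy condition $\sum_{i=j}^\ell a_iU(\ell-i)<U(\ell-j+1)$ read at $j=1$). Factoriality of $\NU$ (Lemma~\ref{Lem : FacBertrand}) guarantees that $\Fac(S'_\beta)\cap A_U^i=\NU\cap A_U^i$, so no factors are lost or gained in passing to the language of $S'_\beta$.

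Having reduced the problem to $\lim_{i\to\infty}\frac{1}{i}\log U(i)$, I would finish with a Cesàro argument. Writing
\[
	\frac{1}{i}\log U(i)=\frac{1}{i}\log U(0)+\frac{1}{i}\sum_{k=0}^{i-1}\log\frac{U(k+1)}{U(k)},
\]
the first term vanishes in the limit, while by Theorem~\ref{Thm : NewBertrand}(d) the summands $\log\frac{U(k+1)}{U(k)}$ converge to $\log(\beta)$; hence their Cesàro mean also converges to $\log(\beta)$, which yields the announced entropy. The main obstacle is the clean justification of the count $\mathrm{Card}(\NU\cap A_U^i)=U(i)$, since this is precisely what ties the combinatorial definition of entropy to the analytically controlled quantity $U(i)$; once it is in place, the dominant-root information from Theorem~\ref{Thm : NewBertrand} does the rest. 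As a consistency check one may note the inclusion $S_\beta\subseteq S'_\beta$ (because $d^*_\beta(1)\le_{\lex}d_\beta(1)$), which already gives the lower bound $\log(\beta)$ for the entropy independently of the counting argument.
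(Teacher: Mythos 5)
Your proof is correct and follows essentially the same route as the paper: identify $\Fac(S'_\beta)$ with the numeration language $\NU$ of the non-canonical Bertrand system, count $U(i)$ words of each length $i$, and read off the entropy from the growth of $U(i)$ via Theorem~\ref{Thm : NewBertrand}. The only cosmetic difference is that you conclude with a Ces\`aro argument from the dominant-root property (item d), whereas the paper invokes the sharper asymptotic $\lim_{i\to\infty}U(i)/\beta^i$ from~\eqref{Eq : Limit} directly; both are parts of the same theorem.
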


\begin{proof}
Let $\beta>1$ be a real number. Let $U$ be the Bertrand numeration system such that $\NU=\Fac(S'_\beta)$, i.e., the numeration system defined by~\eqref{Eq : NewBertrand} with $(a_i)_{i\ge 1}=d_\beta(1)$. Since the number of length-$i$ factors of $S'_\beta$ is equal to $U(i)$, the entropy of $S_\beta'$ is given by $\lim_{i\to \infty} \frac{1}{i} \log (U(i))$. The result now follows from~\eqref{Eq : Limit}.
\end{proof}

We note that, mutatis mutandis, the same proof can be applied in order to show that the $\beta$-shift has entropy $\log(\beta)$.


Finally, whenever $\beta$ is a Parry number, we prove a relation between the number of words of each length in the canonical and the non-canonical $\beta$-shifts.

\begin{proposition}
Suppose that $\beta>1$ is a real number such that $d_\beta(1)=t_1 \cdots t_n$ with $n\ge 1$ and $t_n\ne 0$, and let $U$ and $U'$ respectively be the canonical and non-canonical Bertrand numeration systems associated with $\beta$. Then 
$U'(i+n)=U(i+n)+U'(i)$ for all $i\ge 0$.
\end{proposition}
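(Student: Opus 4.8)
The plan is to reduce everything to the two recurrences that $U$ and $U'$ satisfy and to compare them termwise. First I would record the relevant recurrences. Since $\beta$ is a simple Parry number with $d_\beta(1)=t_1\cdots t_n$, we have $d_\beta^*(1)=(t_1\cdots t_{n-1}(t_n-1))^\omega$. By Corollary~\ref{Cor : SimpleParryPolynomial}, the canonical system satisfies $U(i)=\sum_{j=1}^n t_jU(i-j)$ for all $i\ge n$. For the non-canonical system, reading~\eqref{Eq : NewBertrand} with $(a_i)_{i\ge1}=d_\beta(1)=t_1\cdots t_n0^\omega$ gives $U'(i)=\sum_{j=1}^{\min(i,n)}t_jU'(i-j)+1$ for every $i\ge 0$, since for $i>n$ the digits $a_j=0$ with $j>n$ drop out, while for $i\le n$ the sum runs only up to $i$.

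The key preliminary observation is that $U$ and $U'$ coincide on an initial segment. The digit sequences $d_\beta(1)$ and $d_\beta^*(1)$ agree in their first $n-1$ positions, so for $0\le k\le n-1$ both $U(k)$ and $U'(k)$ satisfy the same recurrence $X(k)=\sum_{j=1}^k t_jX(k-j)+1$ with $X(0)=1$. A short induction then yields $U(k)=U'(k)$ for all $0\le k\le n-1$. Setting $\Delta(i)=U'(i)-U(i)$, this says $\Delta(i)=0$ for $0\le i\le n-1$. Subtracting the two order-$n$ recurrences above, both valid for $i\ge n$, gives $\Delta(i)=\sum_{j=1}^n t_j\Delta(i-j)+1$ for all $i\ge n$; the inhomogeneous term $+1$ survives precisely because $U$ is homogeneous while $U'$ is not.

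Finally, I would introduce $W(i)=\Delta(i+n)=U'(i+n)-U(i+n)$ and show that $W$ obeys exactly the defining recurrence of $U'$. For any $i\ge 0$ we have $i+n\ge n$, so the relation for $\Delta$ applies and $W(i)=\sum_{j=1}^n t_j\Delta(i+n-j)+1$. Splitting this sum at $j=i$: when $j\le i$ one has $i+n-j\ge n$, hence $\Delta(i+n-j)=W(i-j)$; when $j>i$ one has $0\le i+n-j\le n-1$, hence $\Delta(i+n-j)=0$ by the initial-segment computation. This leaves $W(i)=\sum_{j=1}^{\min(i,n)}t_jW(i-j)+1$, which is verbatim the recurrence satisfied by $U'$. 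Since both $W$ and $U'$ obey the same relation, and this relation determines the whole sequence from the empty-sum base case $i=0$ (giving value $1$), a strong induction yields $W(i)=U'(i)$ for all $i\ge 0$, that is $U'(i+n)=U(i+n)+U'(i)$, as claimed.

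I expect the only delicate point to be the bookkeeping in the truncated regime $i<n$: one must track where the sums are cut off and confirm that the terms removed by truncation are exactly those $\Delta(i+n-j)$ that vanish thanks to the equality $U=U'$ on $\{0,\ldots,n-1\}$. Everything else is a direct manipulation of the recurrences furnished by Theorem~\ref{Thm : NewBertrand} and Corollary~\ref{Cor : SimpleParryPolynomial}.
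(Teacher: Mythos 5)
Your proof is correct and follows essentially the same route as the paper's: both establish $U'=U$ on $\{0,\dots,n-1\}$ from the agreement of the first $n-1$ digits of $d_\beta(1)$ and $d_\beta^*(1)$, subtract the homogeneous recurrence for $U$ (Corollary~\ref{Cor : SimpleParryPolynomial}) from the inhomogeneous one for $U'$, and split the resulting sum at $j=i$ so that the vanishing terms $\Delta(i+n-j)$ with $j>i$ leave exactly the defining recurrence of $U'$. The only cosmetic difference is that you phrase the conclusion as ``two sequences satisfying the same recurrence with the same initialization coincide,'' whereas the paper runs the induction directly on the identity $U'(i+n)-U(i+n)=U'(i)$.
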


\begin{proof}
Since $\Pref_{n-1}(d_\beta(1))=\Pref_{n-1}(d^*_\beta(1))$, we have $U'(i)=U(i)$ for all $i\in \{0,\ldots,n-1\}$. Moreover, since $t_1\cdots t_n$ is the only length-$n$ factor of $S'_\beta$ that is not present in $S_\beta$, we have $U'(n)=U(n)+1$. Hence, the statement holds for $i=0$ since $U(0)=U'(0)=1$. Now we proceed by induction. Consider $i\ge1$ and suppose that the result holds for indices less than $i$. By Theorem~\ref{Thm : NewBertrand} and Proposition~\ref{Cor : SimpleParryPolynomial}, we get that
$	U'(i+n)-U(i+n)
	=\sum_{j=1}^n t_jU'(i+n-j)+1\linebreak-\sum_{j=1}^n t_jU(i+n-j)
	=\sum_{j=1}^n t_j(U'(i+n-j)-U(i+n-j))+1$
where $U'(i+n-j)-U(i+n-j)=0$ if $j>i$, and by induction hypothesis, $U'(i+n-j)-U(i+n-j)=U'(i-j)$ if $j\le i$. As a first case, assume that $i\in\{1,\ldots,n\}$. We obtain $U'(i+n)-U(i+n)=\sum_{j=1}^i t_jU'(i-j)+1=U'(i)$ where the second equality comes from Theorem~\ref{Thm : NewBertrand}. As a second case, assume $i\ge n$. Similarly, we get $U'(i+n)-U(i+n)=\sum_{j=1}^n t_jU'(i-j)+1=U'(i)$.
\end{proof}

%
%
%
%
 \bibliographystyle{splncs04}
\bibliography{NumerationSystems.bib}

\end{document}